\newtheorem{theorem}{Theorem}
\newtheorem{remark}{Remark}
\newtheorem{corollary}{Corollary}
\newtheorem{lemma}{Lemma}
\crefname{equation}{}{}
\newcommand{\RNum}[1]{\uppercase\expandafter{\romannumeral #1\relax}}
\algnewcommand\And{\textbf{and} }
\pgfplotsset{compat=1.3}
\title{Characterization of Singular Arcs in Spacecraft Trajectory Optimization}
\author{Andrea C. Morelli\footnote{Ph.D. Candidate, Politecnico di Milano, Dept. of Aerospace Science and Technology, Via La Masa, 20156 Milano. Email: andreacarlo.morelli@polimi.it. Corresponding author.}, Carmine Giordano\footnote{PostDoc Fellow, Politecnico di Milano, Dept. of Aerospace Science and Technology, Via La Masa, 20156 Milano. Email: carmine.giordano@polimi.it. AIAA Member.}, Riccardo Bonalli \footnote{Associate Professor, Laboratoire des Signaux et Systèmes, Université Paris-Saclay, CNRS, CentraleSupèlec, B\^at.
Bréguet, 3 Rue Joliot Curie, 91190 Gif-sur-Yvette, France. E-mail: riccardo.bonalli@cnrs.fr}, and Francesco Topputo\footnote{Full Professor, Politecnico di Milano, Dept. of Aerospace Science and Technology, Via La Masa, 20156 Milano. Email: francesco.topputo@polimi.it. AIAA Senior Member.}}
\begin{document}
\maketitle

\begin{abstract}
Low-thrust engines for interplanetary spacecraft transfers allow cost-effective space missions with flexible launch and arrival dates. To find fuel-optimal trajectories, an optimal control problem is to be solved. Pontryagin's Maximum Principle shows that the structure of the optimal control is bang-bang with the possibility of singular arcs. Even though the latter  have been heuristically shown to rarely appear in practical applications, a full theoretical characterization does not exist in the literature. As a growing number of missions are expected to adopt low-thrust engines in the near future, such study is required to have a comprehensive understanding of the problem. This work presents analytical necessary conditions for the existence of singular arcs that only depend on three physical variables and not on the costates. Moreover, it provides an analytical expression of the singular control that depends on a limited set of physical variables. This is a fundamental feature, as simple evaluation of the necessary condition and of the singular control can be performed. Finally, it provides insightful information on the reasons why singular arcs are rare and it quantifies the possibility of their occurrence.
\end{abstract}

\section{Introduction}
\label{sec:introduction}
The recent advancements in the field of low-thrust engines for interplanetary spacecraft transfers have opened completely new mission scenarios. The high efficiency of such thrusters and their continuous thrust enabled cost-effective space missions with flexible launch and arrival dates \cite{topputo2021envelop}. Consequently, an increasing number of missions are expected to adopt this technology in the near future. \\
In space missions, trajectories are usually designed to minimize some key objective, such as the fuel mass consumption \cite{morelli2021robust}. Due to their continuous thrust, low-thrust engines require an Optimal Control Problem (OPC) to be solved to find such trajectories \cite{bryson1969applied}. State-of-the-art methods that are used to solve the problem divide into direct and indirect ones. The former discretize the continuous-time problem and solve the resulting (non)linear program, whereas the latter solve a two-point boundary value problem formulated using the calculus of variation \cite{betts1998survey}. \\ By employing the Pontryagin's Maximum Principle (PMP) \cite{kirk1970optimal} it can be proved that the optimal control of the low-thrust trajectory optimization (LTO) problem follows a bang-bang structure, i.e., the throttle factor should be either at the maximum or the minimum value \cite{topputo2014survey}. However, there could also be cases where the first-order optimality conditions cannot provide any information on the structure of the optimal control. In that case, the control is usually referred to as singular \cite{kirk1970optimal}. A thorough analysis of singular control in the powered descent and landing (PDG) problem was recently performed \cite{leparoux2022structure}. In that case, due to the constant free dynamics, there could only be one singular arc in a given trajectory, and only for specific initial conditions. However, the results are not directly applicable to the  LTO problem given the highly nonlinear free dynamics. In practical applications, if unperturbed two-body dynamics with thrust acceleration are considered, singular thrust arcs are rare and therefore it is not uncommon to solve the LTO problem as if they did not exist \cite{russell2007primer,taheri2016enhanced,bertrand2002new,tang2018fuel,nurre2023duty}, though this is only supported by empirical results rather than theoretical evidence. Previous work has shown that singular arcs can in fact theoretically happen in a two-body dynamical environment and trajectories with several singular arcs can actually be designed \cite{azimov2010extremal}. A further example are the \textit{Lawden’s
spirals} for intermediate thrust arcs \cite{lawden1963optimal}. Others have formulated necessary conditions for the optimality of singular arcs in the case of multiple gravitational bodies \cite{park2013necessary}. So far, researchers have focused on investigating the optimality of singular arcs rather than quantifying, theoretically and numerically, their occurrence. Moreover, in many cases, necessary conditions and expressions of the singular controls were expressed as a function of both states and costates \cite{park2013necessary}, making it impossible to have a physical grasp on the problem. \\ This work proposes for the first time a full characterization of singular arcs in spacecraft LTO in a two-body dynamical environment, but the logic applies to more complex dynamics as well. In reality, more complex dynamics are usually considered when designing a spacecraft trajectory (e.g., $n$-body problem). The bang-bang structure of the optimal control does not depend on the specific dynamics, but the same does not apply to the frequency of singular arcs, which could appear more often. It is therefore relevant to investigate the behaviour of singular arcs to understand whether mission analysts can safely assume that a bang-bang control accurately captures the solution of a LTO problem. The contribution of this article is threefold. First, necessary conditions for having singular arcs that solely depend on three physical variables are provided. Moreover, the singular control is defined through an expression that depends on a limited set of physical variables. Finally, based on our theoretical results, we provide insightful information about the reasons why in practical applications singular arcs rarely appear. In particular, we show that the necessary conditions have a limited number of solutions, given the orbit eccentricity and true anomaly. In addition, we 
show that for typical inner Solar System missions, even in the case the necessary conditions are satisfied, the expression of the singular throttle factor is singular in the minority of the cases. \\ 
The remainder of the paper is organized as follows. Section \ref{sec:problem} formulates the considered LTO problem. Section \ref{sec:strategy} describes the approach that has been used to characterize the thrust arcs, as well as the theoretical findings. Section \ref{sec:genass} presents the numerical simulations. Finally, \ref{sec:conclusion} concludes the work.
\section{Problem Statement}
\label{sec:problem}
The two-body dynamics of a spacecraft around a primary body and equipped with a low-thrust engine can be expressed in Cartesian coordinates as \cite{jiang2012practical}
\begin{equation}
\left\{\begin{split}
&\Dot{x} = v_x\\
&\Dot{y} = v_y\\
&\Dot{z} = v_z\\
&\Dot{v}_x = -\mu \frac{x}{\|\mathbf{r}\|^3} + \frac{T_{\text{max}}}{m}c \cos \delta \cos \gamma \\
&\Dot{v}_y = -\mu \frac{y}{\|\mathbf{r}\|^3} + \frac{T_{\text{max}}}{m}c \cos \delta \sin \gamma \\
&\Dot{v}_z = -\mu \frac{z}{\|\mathbf{r}\|^3} + \frac{T_{\text{max}}}{m}c \sin \delta \\
& \Dot{m} = - \frac{T_{\text{max}}}{I_{\text{sp}}g_0}c
\end{split} \right.
\label{dynamics1}
\end{equation}
$\mathbf{r} = [x, y, z]$ and $\mathbf{v} = [v_x, v_y, v_z]$ are the position and velocity vectors of the spacecraft, respectively, and $m$ is the spacecraft mass. $\mathbf{u} = [c, \delta, \gamma]$ are the controls, where $c \in [0, 1]$ is the thrust throttle factor and $\delta$ and $\gamma$ are the in- and out-of-plane angles of the thrust vector. $T_{\text{max}}$ is the constant maximum thrust of the engine, $I_{\text{sp}}$ is the specific impulse, $g_0$ is the Earth gravity acceleration at sea level, and $\mu$ is the gravitational parameter of the primary body.
Equation \cref{dynamics1} can be written in vectorial form as
\begin{equation}
    \Dot{\mathbf{x}} = \mathbf{f}(\mathbf{x}) + \mathbf{g}(\mathbf{x}, \mathbf{u})
    \label{dyncomp}
\end{equation}
where
\begin{equation}
 \mathbf{f}(\mathbf{x}) =
\begin{bmatrix}
\mathbf{0} \\
        - \mu \frac{\mathbf{r}}{\|\mathbf{r}\|^3} \\
        0
\end{bmatrix}, \quad 
\mathbf{g}(\mathbf{x}, \mathbf{u}) = \begin{bmatrix}
\mathbf{0} \\
        \frac{T_{\text{max}}}{m} c \mathbf{n} \\
        - \frac{T_{\text{max}}}{I_{\text{sp}}g_0}c
\end{bmatrix}
\end{equation}
The vector 
\begin{equation}
    \mathbf{n} = \begin{bmatrix}
        \cos \delta \cos \gamma \\
        \cos \delta \sin \gamma \\
        \sin \delta
    \end{bmatrix}
\end{equation}
represents the thrust direction. In all the above equations, the time dependency has been dropped for brevity. \\ We consider the problem of finding the spacecraft trajectory that minimizes the fuel consumption from a fixed initial boundary condition to a fixed final boundary condition and no path constraints. The objective function can be expressed in Meyer form as \cite{longuski2014optimal}
\begin{equation}
    J = -m(t_f)
\end{equation}
where $t_f$ is the time of flight. Note that minimizing $J$ is equivalent to maximizing the final mass $m(t_f)$. The constraints of the problem are the boundary conditions
\begin{equation}
    \mathbf{x}(t_0) = \mathbf{x}_0, \quad \mathbf{x}(t_f) = \mathbf{x}_f  
    \label{bcs}
\end{equation}
and  the control bounds, namely
\begin{equation}
    0 \leq c \leq 1
    \label{boundsc}
\end{equation}
The optimization problem is formulated as
\begin{equation}
    \underset{\mathbf{u} \in \mathbb{U}}{\text{min}} \, J \quad \text{s.t. Eqs.\ \cref{dyncomp}, \cref{bcs}, and \cref{boundsc}}
    \label{OCP}
\end{equation}
where $\mathbb{U}$ is the set of admissible controls. The Hamiltonian of the system is  \cite{bryson1969applied}
\begin{equation}
        H(\mathbf{x}, \mathbf{u}, \mathbf{p}) = \mathbf{p}_r \cdot \mathbf{v} + \mathbf{p}_v \cdot \left(-\mu \frac{\mathbf{r}}{\|\mathbf{r}\|^3} + \frac{T_{\text{max}}}{m}c \mathbf{n}\right)  -p_m \frac{T_{\text{max}}}{I_{\text{sp}}g_0}c
\end{equation}
where $\mathbf{p} = [\mathbf{p}_r, \mathbf{p}_v, p_m]$ is the vector that collects the position, velocity, and mass costate variables. According to the Pontryagin's Maximum Principle (PMP), the optimal control $\mathbf{u}^*$ maximizes the Hamiltonian \cite{kirk1970optimal}. Therefore, 
\begin{equation}
\begin{split}
    \mathbf{u}^* &= \underset{\mathbf{u} \in \mathbb{U}} {\text{argmax}} \, \left[ (\mathbf{p}_v \cdot \mathbf{n}) \frac{T_{\text{max}}}{m} - p_m \frac{T_{\text{max}}}{I_{\text{sp}}g_0}\right]c 
\end{split}
\label{ustar}
\end{equation}
Let us indicate the optimal thrust direction with $\mathbf{n}^*$, and let us define the switching function
\begin{equation}
    S = (\mathbf{p}_v \cdot \mathbf{n}^*) \frac{T_{\text{max}}}{m} - p_m \frac{T_{\text{max}}}{I_{\text{sp}}g_0}
    \label{Scomplete}
\end{equation}
In turn,
\begin{equation}
\begin{split}
    c^* &= \underset{c \in \mathbb{U}}{\text{argmax}} \, Sc
\end{split}
\end{equation}
 As a consequence,
\begin{equation}
c^* = \left\{\begin{split}
& 1 \hspace{1.3cm} \text{if} \quad S > 0 \\
& 0 \hspace{1.3cm} \text{if} \quad S < 0 \\
& \in (0, 1) \quad \text{if} \quad S = 0
\end{split} \right.
\label{optimalc}
\end{equation}
The third of the cases in Eq.\ \cref{optimalc} represents the singular case. If $\mathbf{p}_v \neq \mathbf{0}$, then $\mathbf{n}^* = \frac{\mathbf{p}_v}{\|\mathbf{p}_v\|}$ and the switching function $S$ becomes
\begin{equation}
     S = \frac{\|\mathbf{p}_v\|}{m} -\frac{p_m}{I_{\text{sp}}g_0}
     \label{Sred}
\end{equation}
Finally, the dynamics of the costates can also be retrieved from the Hamiltonian function and be written as \cite{bryson1969applied}
\begin{equation}
\left\{\begin{split}
&\Dot{\mathbf{p}}_r = -\frac{\partial H}{\partial \mathbf{r}} = -\frac{3\mu}{\|\mathbf{r}\|^5}(\mathbf{r} \cdot \mathbf{p}_v) \mathbf{r} + \frac{\mu}{\|\mathbf{r}\|^3}\mathbf{p}_v\\
&\Dot{\mathbf{p}}_v = -\frac{\partial H}{\partial \mathbf{v}} = -\mathbf{p}_r\\
&\Dot{p}_m = -\frac{\partial H}{\partial m} =  c\frac{\|\mathbf{p}_v\|T_{\text{max}}}{m^2}
\end{split} \right.
\label{costatedynamics}
\end{equation}
\begin{lemma}
Let \(S = 0\) on \(I_s \subset [t_0, t_f]\). Then, $\mathbf{p}_v \neq \mathbf{0}$ and $\mathbf{n}^* = \frac{\mathbf{p}_v}{\|\mathbf{p}_v\|}$ on $I_s$.
\label{lemma1}
\end{lemma}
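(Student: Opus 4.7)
The plan is to argue by contradiction: I will assume there exists $t^* \in I_s$ at which $\mathbf{p}_v(t^*) = \mathbf{0}$ and show that this forces the entire costate tuple $(\mathbf{p}_r,\mathbf{p}_v,p_m)$ to vanish identically on $[t_0,t_f]$, which contradicts the non-triviality of the PMP multipliers (equivalently, the transversality condition $p_m(t_f) = 1$ that follows from the free final mass together with the cost $J=-m(t_f)$).

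The starting observation is that the maximization in Eq.~\cref{ustar} gives $\mathbf{p}_v\cdot\mathbf{n}^* = \|\mathbf{p}_v\|$ even when $\mathbf{p}_v = \mathbf{0}$, so Eq.~\cref{Scomplete} can be written as $S = T_{\text{max}}\bigl(\|\mathbf{p}_v\|/m - p_m/(I_{\text{sp}}g_0)\bigr)$ throughout $[t_0,t_f]$. Using $\mathbf{p}_v(t^*)=\mathbf{0}$ and $S(t^*)=0$ then immediately yields $p_m(t^*)=0$. The pointwise information $\mathbf{p}_v(t^*)=\mathbf{0}$, $p_m(t^*)=0$ alone does not suffice to invoke ODE uniqueness, because $\mathbf{p}_r(t^*)$ is still unconstrained, so the key technical step is to propagate these zeros to the whole arc.

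To do so, I would compute, using Eqs.~\cref{dynamics1} and \cref{costatedynamics},
\begin{equation*}
    \frac{d}{dt}(p_m\,m) = \dot{p}_m\, m + p_m\,\dot{m} = c\,T_{\text{max}}\!\left(\frac{\|\mathbf{p}_v\|}{m} - \frac{p_m}{I_{\text{sp}}g_0}\right) = c\,T_{\text{max}}\,S,
\end{equation*}
so that $p_m\,m$ is conserved on $I_s$. Combined with $p_m(t^*)=0$ and $m>0$ this forces $p_m\equiv 0$ on $I_s$; substituting back into $S=0$ gives $\|\mathbf{p}_v\|\equiv 0$, hence $\mathbf{p}_v\equiv\mathbf{0}$ on $I_s$. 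The identity $\dot{\mathbf{p}}_v = -\mathbf{p}_r$ from Eq.~\cref{costatedynamics} then forces $\mathbf{p}_r\equiv\mathbf{0}$ on the interior of $I_s$.

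At this point all three costates vanish identically on a nondegenerate subinterval, and since the adjoint system~\cref{costatedynamics}, evaluated along the fixed primal trajectory $(\mathbf{r}(t),\mathbf{v}(t),m(t))$, is a well-posed ODE with Lipschitz right-hand side, uniqueness yields $(\mathbf{p}_r,\mathbf{p}_v,p_m)\equiv\mathbf{0}$ on all of $[t_0,t_f]$, contradicting $p_m(t_f)=1$. Once $\mathbf{p}_v\neq\mathbf{0}$ on $I_s$ is established, the formula $\mathbf{n}^* = \mathbf{p}_v/\|\mathbf{p}_v\|$ is simply the argmax of $\mathbf{p}_v\cdot\mathbf{n}$ over unit vectors, as already noted before Eq.~\cref{Sred}. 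The main obstacle I anticipate is precisely the discovery of the conservation law for $p_m\,m$: without it one cannot bridge the gap between the pointwise zero at $t^*$ and the subinterval of vanishing needed to trigger ODE uniqueness.
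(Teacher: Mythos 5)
Your proof is correct, and its skeleton coincides with the paper's: argue by contradiction, use $S=0$ together with $\mathbf{p}_v=\mathbf{0}$ and Eq.~\cref{Scomplete} to get $p_m=0$, use $\dot{\mathbf{p}}_v=-\mathbf{p}_r$ from Eq.~\cref{costatedynamics} to kill $\mathbf{p}_r$, and contradict the nontriviality of the multipliers. The genuine difference is in how the contradiction hypothesis is set up. The paper assumes $\mathbf{p}_v=\mathbf{0}$ and immediately writes $\dot{\mathbf{p}}_v=\mathbf{0}$, which tacitly presumes that $\mathbf{p}_v$ vanishes on a whole subinterval rather than at a single instant; as written, it does not exclude an isolated zero of $\mathbf{p}_v$ inside $I_s$. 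You start from a pointwise zero at $t^*$ and bridge that gap with the identity $\frac{\mathrm{d}}{\mathrm{d}t}(p_m m)=c\,T_{\text{max}}\bigl(\|\mathbf{p}_v\|/m-p_m/(I_{\text{sp}}g_0)\bigr)=cS$, which is correct given Eqs.~\cref{dynamics1} and \cref{costatedynamics} and shows $p_m m$ is constant on $I_s$; this forces $p_m\equiv 0$, hence $\mathbf{p}_v\equiv\mathbf{0}$, hence $\mathbf{p}_r\equiv\mathbf{0}$ on $I_s$, exactly the configuration the paper's final step needs. Your further propagation to all of $[t_0,t_f]$ by ODE uniqueness and the appeal to $p_m(t_f)=1$ is more than is required (the paper contradicts nontriviality on $I_s$ directly, which also sidesteps the normal-versus-abnormal distinction your transversality argument implicitly assumes), but it is sound. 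In short: same destination, with your conservation-law step supplying a small piece of rigor the paper's own proof elides.
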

\begin{proof}
To prove the claim by contradiction, assume that $\mathbf{p}_v = \mathbf{0}$. From Eq.\ \cref{Scomplete}, since $S = 0$, then also $p_m = 0$. Moreover, since $\mathbf{p}_v = \mathbf{0}$, then $\dot{\mathbf{p}}_v = \mathbf{0}$. From the second of Eqs.\ \cref{costatedynamics}, $\mathbf{p}_r = \mathbf{0}$. However, this would violate the nontriviality condition $[ \mathbf{p}_r, \mathbf{p_v}, p_m] \neq \mathbf{0}$ on $I_s$.
\end{proof} 
Therefore, the optimal thrust vector $\mathbf{n}^*$ and the costate $\mathbf{p}_v$ associated with the spacecraft velocity are always parallel on singular arcs. This result will be used throughout the rest of the paper.
\section{General Results}
\label{sec:strategy}
In this section, two of our main findings are presented in the form of theorems, along with several lemmas that will be used to proof the theorems.
\subsection{Main Statements}
\begin{theorem}
Let a LTO problem be described by Eq.\ \cref{OCP}, with the further assumption of planar dynamics. Moreover, let \(S = 0\) on \(I_s\) for that problem. If the angle between the thrust direction  \(\mathbf{n}^*\) and the spacecraft position vector $\mathbf{r}$ is denoted as $\beta$, then the closed-form surface that relates $\beta$ to the eccentricity $e$ and the true anomaly $\theta$ of the spacecraft on $I_s$ is given by 
    \begin{equation}
         \Psi(e,\theta, \beta) = 2 \cos \beta \sin \beta \left( -1 {\color{red} \pm} \sqrt{\frac{1 - 3 \cos^2 \beta}{1 + e \cos \theta}} {\color{blue} \mp} \sqrt{\frac{1 - 3 \cos^2 \beta}{1 + e \cos \theta}} 
 \right) - (1 - 3\cos^2 \beta) \frac{e \sin \theta}{1 + e \cos \theta} = 0
\label{algebraici}
\end{equation}
\label{theorem1}
\end{theorem}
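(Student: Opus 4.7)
Since $S\equiv 0$ on $I_s$, every time derivative of $S$ also vanishes on $I_s$. The plan is to extract successive algebraic constraints from $\dot S=\ddot S=\dddot S=0$, project the resulting identities onto a radial--transverse frame in the orbital plane, and then invoke the Keplerian osculating relations to eliminate all costate variables in favor of $e$, $\theta$, and $\beta$. By Lemma~\ref{lemma1}, $\mathbf{p}_v=\|\mathbf{p}_v\|\mathbf{n}^*$ throughout $I_s$, so the geometric identity $\mathbf{r}\cdot\mathbf{p}_v=\|\mathbf{r}\|\|\mathbf{p}_v\|\cos\beta$ is what transports the costate-free angle $\beta$ into the algebra.

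\textbf{First two derivatives.} Differentiating \cref{Sred} with $\dot{\mathbf{p}}_v=-\mathbf{p}_r$ together with $\dot m$ and $\dot p_m$ from \cref{costatedynamics}, the $c$-dependent contributions cancel exactly and one obtains $\dot S = -(\mathbf{p}_v\cdot\mathbf{p}_r)/(m\|\mathbf{p}_v\|)$; hence $\mathbf{p}_v\cdot\mathbf{p}_r=0$ on $I_s$. In the planar case this forces $\mathbf{p}_r$ to be orthogonal to $\mathbf{n}^*$ inside the orbit plane, up to a residual sign $\sigma\in\{-1,+1\}$. Differentiating once more, using $\dot{\mathbf{p}}_r$ from \cref{costatedynamics} and $\mathbf{p}_v\cdot\mathbf{p}_r=0$, yields
\[
\ddot S = -\frac{1}{m\|\mathbf{p}_v\|}\left[-\|\mathbf{p}_r\|^2 + \frac{\mu}{\|\mathbf{r}\|^3}\|\mathbf{p}_v\|^2 - \frac{3\mu}{\|\mathbf{r}\|^5}(\mathbf{r}\cdot\mathbf{p}_v)^2\right],
\]
so that $\ddot S=0$ reduces, via $\mathbf{r}\cdot\mathbf{p}_v=\|\mathbf{r}\|\|\mathbf{p}_v\|\cos\beta$, to $\|\mathbf{p}_r\|^2=(\mu/\|\mathbf{r}\|^3)\|\mathbf{p}_v\|^2(1-3\cos^2\beta)$. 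This determines the ratio $\lambda\defeq\|\mathbf{p}_r\|/\|\mathbf{p}_v\|=\sqrt{\mu(1-3\cos^2\beta)/\|\mathbf{r}\|^3}$ (up to a branch) and simultaneously forces the reality constraint $1-3\cos^2\beta\ge 0$.

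\textbf{Third derivative and Keplerian substitution.} Differentiating the bracket of $\ddot S$ once more in time and evaluating on $I_s$ (where $\mathbf{p}_v\cdot\mathbf{p}_r=0$ simplifies several cross-terms and $\mathbf{p}_r\cdot\dot{\mathbf{p}}_r$ is computed directly from \cref{costatedynamics}) produces an identity polynomial in the scalars $\mathbf{r}\cdot\mathbf{v}$, $\mathbf{v}\cdot\mathbf{p}_v$, $\mathbf{r}\cdot\mathbf{p}_r$, $\mathbf{r}\cdot\mathbf{p}_v$ and $\|\mathbf{p}_v\|$. Projecting onto the rotating radial--transverse basis $(\hat r,\hat\theta)$ with $\mathbf{n}^*=\cos\beta\,\hat r+\sin\beta\,\hat\theta$ and $\mathbf{p}_r=\sigma\|\mathbf{p}_r\|(-\sin\beta\,\hat r+\cos\beta\,\hat\theta)$, every costate magnitude factors out through the already-determined $\lambda$. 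Finally, substituting the osculating Keplerian relations $\|\mathbf{r}\|=p/(1+e\cos\theta)$, $v_r=\sqrt{\mu/\|\mathbf{r}\|}\,e\sin\theta/\sqrt{1+e\cos\theta}$ and $v_\theta=\sqrt{\mu/\|\mathbf{r}\|}\sqrt{1+e\cos\theta}$, and dividing through by $(1+e\cos\theta)$, collapses the surviving expression into exactly \cref{algebraici}; the two pairs of signs in the statement correspond to the independent choices of $\sigma$ and of the square-root branch in $\lambda$.

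\textbf{Main obstacle.} The principal difficulty is the algebra of the third step: differentiating the bracket of $\ddot S$ generates nonlinear couplings between $\mathbf{p}_v$, $\mathbf{p}_r$, $\mathbf{r}$ and $\mathbf{v}$, and it is only after the radial--transverse projection and the substitution of $\lambda$ that every unknown costate magnitude cancels. Tracking the two independent sign ambiguities throughout — the orientation of $\mathbf{p}_r$ perpendicular to $\mathbf{n}^*$ in the plane, and the square-root branch defining $\lambda$ — is what is needed both to recover and to physically interpret the sign structure appearing in $\Psi(e,\theta,\beta)=0$.
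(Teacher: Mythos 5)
Your proposal follows essentially the same route as the paper: the chain $\dot S=\ddot S=\dddot S=0$ reproduces, in order, the paper's Lemma~\ref{lemma2} ($\mathbf{p}_v\cdot\mathbf{p}_r=0$), Lemma~\ref{lemma3} (the ratio $\|\mathbf{p}_r\|/\|\mathbf{p}_v\|=\sqrt{\mu(1-3\cos^2\beta)/\|\mathbf{r}\|^3}$ together with the reality constraint $1-3\cos^2\beta\ge 0$), and then the same elimination of the costates via the osculating Keplerian relations. The only cosmetic difference is that you carry the angular-rate information through the projected scalar products $\mathbf{v}\cdot\mathbf{p}_v$ and $\mathbf{r}\cdot\mathbf{p}_r$ rather than through the paper's explicit decomposition $\delta=\alpha+\beta$ and the identity $\|\mathbf{p}_r\|=\|\mathbf{p}_v\|\,|\dot\alpha+\dot\beta|$; the two bookkeepings are equivalent.
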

\begin{remark}
As it will be shown later on, the signs ${\color{red} \pm}$ and ${\color{blue} \mp}$ come from different sources and can change independently. Therefore, the two terms do not cancel out a priori.
\end{remark}
Equation \cref{algebraici} represents a set of necessary conditions to have singular arcs. This means that the angle between the optimal thrust direction and the position vector must assume precise values depending on the value of the eccentricity and true anomaly of the spacecraft orbit.
\begin{theorem}
    Let $S = 0$ and $\left(e, \theta, \beta\right)$ satisfy Eq.\ \cref{algebraici} on $I_s$. Moreover, let the term  $A(\beta) = (1 - 3\cos^2 \beta) \cos \beta + 2 \cos \beta \sin^2 \beta \neq 0$ on $I_s$.  Then, 
    \begin{enumerate}
        \item the singular thrust control action $c_s$ on $I_s$ can be expressed as
    \begin{equation}
        c_s = \frac{B(\|\mathbf{r}\|, \beta, e, \theta, m)}{A(\beta)}
    \end{equation}
      \item the term $A(\beta)$ can only be zero  when either $\cos \beta = 0$ or $\sin \beta = \pm \sqrt{\frac{2}{5}}$. The first case corresponds to the conditions $e = 0$ or $\theta = 2k\pi$, $k \in \mathbb{Z}$. Moreover, for that case,
\begin{enumerate}
\item[a.] if $e = 0$, there might exist an interval $\tilde{I}_s \in I_s$ such that $A(\beta) = 0$ on $\tilde{I}_s$ if $\cos \theta = 0$;
\item[b.] if $\theta = 2k\pi$, $k \in \mathbb{Z}$, then $A(\beta) = 0$ only at isolated points, i.e., $\nexists \, \tilde{I}_s \in I_s $ such that $A(\beta) = 0$ on $\tilde{I}_s$.
    \end{enumerate}
    \end{enumerate} 
    \label{theorem2}
\end{theorem}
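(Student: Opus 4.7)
The plan is to address the two claims in turn, with the bulk of the technical work falling on Claim~1.

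For Claim~1, I would compute successive time derivatives of the switching function $S$ of Eq.~\cref{Sred} along the state-costate dynamics of Eqs.~\cref{dyncomp} and~\cref{costatedynamics}, and continue until the throttle~$c$ enters explicitly. A direct calculation gives $\dot S = -(\mathbf{p}_v\cdot\mathbf{p}_r)/(m\|\mathbf{p}_v\|)$: the $c$-contributions from $\dot m$ and from $\dot p_m$ cancel exactly at first order, so the identity $\dot S = 0$ reduces to $\mathbf{p}_v\cdot\mathbf{p}_r = 0$ on $I_s$. Differentiating again and using this identity to kill the denominator-differentiation terms, $\ddot S$ is still $c$-free and produces an algebraic relation between $\mathbf{p}_r$, $\mathbf{p}_v$ and $\mathbf{r}$. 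The same mechanism persists at the next order, where $\dddot S = 0$ yields yet another $c$-free relation (the denominator terms are absorbed by $\ddot S = 0$). The control finally enters at the fourth derivative, through the thrust contribution $(T_{\text{max}}/m)\,c\,\mathbf{n}$ inside $\dot{\mathbf{v}}$; using $\mathbf{n} = \mathbf{p}_v/\|\mathbf{p}_v\|$ from Lemma~\ref{lemma1} together with $\mathbf{r}\cdot\mathbf{n} = \|\mathbf{r}\|\cos\beta$, the coefficient of $c$ in the fourth-order identity can be expressed in the orbit-frame variables $(\|\mathbf{r}\|, \beta, e, \theta)$, and after repeated use of Eq.~\cref{algebraici} it collapses to a nonzero multiple of $A(\beta)$. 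Gathering the remaining $c$-independent terms into $B(\|\mathbf{r}\|,\beta,e,\theta,m)$ and solving for the control yields $c_s = B/A$, which is well defined whenever $A(\beta) \neq 0$.

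For Claim~2, the zero set of $A$ follows from the factorization
\begin{equation*}
A(\beta) = \cos\beta\,\bigl[(1 - 3\cos^2\beta) + 2\sin^2\beta\bigr] = \cos\beta\,(3 - 5\cos^2\beta),
\end{equation*}
so $A(\beta) = 0$ if and only if $\cos\beta = 0$ or $\cos^2\beta = 3/5$, i.e.\ $\sin\beta = \pm\sqrt{2/5}$. Substituting $\cos\beta = 0$ into Eq.~\cref{algebraici}, the square-root bracket has $\cos\beta$ as a prefactor and so vanishes, while $1 - 3\cos^2\beta = 1$, and the equation reduces to $-e\sin\theta/(1 + e\cos\theta) = 0$, forcing $e = 0$ or $\sin\theta = 0$. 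For subcase~(b), along any physical Keplerian orbit the true anomaly satisfies $\dot\theta \neq 0$ on open intervals, so $\sin\theta = 0$ admits only isolated time instants (periapsis/apoapsis crossings), ruling out any sub-interval $\tilde I_s$ on which $A(\beta) = 0$. For subcase~(a), with $e = 0$ Eq.~\cref{algebraici} is trivially satisfied at $\beta = \pi/2$ for every $\theta$, so the obstruction must come from dynamical compatibility; I would impose the geometric content of Lemma~\ref{lemma1} in the form $\mathbf{p}_v \cdot \mathbf{r} = 0$ persistently, differentiate along the flow using $\dot{\mathbf{p}}_v = -\mathbf{p}_r$, and simplify with $\mathbf{p}_v\cdot\mathbf{p}_r = 0$ and with the singular-arc identity extracted from $\ddot S = 0$; the resulting compatibility condition, after injecting the circular-orbit kinematics, reduces to $\cos\theta = 0$.

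The main obstacle is the algebraic part of Claim~1: the higher derivatives of $S$ in Cartesian form are long expressions, and distilling the coefficient of $c$ into exactly $A(\beta)$ demands switching to the orbit-frame variables $(\|\mathbf{r}\|, \beta, e, \theta)$ and systematically using the singular-arc identities $S = \dot S = \ddot S = \dddot S = 0$ together with Eq.~\cref{algebraici} to eliminate residual $\sin\theta$- and square-root-dependent terms. The zero analysis of $A$ in Claim~2 is, by comparison, a factorization followed by a short kinematic argument about the Keplerian flow, with the subcase~(a) compatibility check being the most delicate step.
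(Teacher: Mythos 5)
Your plan for Claim 1 follows the paper's route in all essentials: $\dot S = 0$ reduces to $\mathbf{p}_v\cdot\mathbf{p}_r=0$, the second and third derivatives remain $c$-free, and $c$ first enters at the fourth derivative (the paper's $\dot D_3 = 0$, Eq.\ \cref{D3dot_states4}), whose $c$-coefficient is $-\tfrac{T_{\text{max}}}{m\|\mathbf{r}\|}A(\beta)$. One correction of emphasis, though: that coefficient does not "collapse to $A(\beta)$ after repeated use of Eq.\ \cref{algebraici}". It is assembled from exactly two thrust contributions, $(1-3\cos^2\beta)\cos\beta$ coming from $\mathbf{r}\cdot\dot{\mathbf{v}}$ and $2\cos\beta\sin^2\beta$ coming from $\ddot\beta$, and the second is only obtained by expressing $\ddot\alpha$ through the Gauss variational equations \cref{elemdot} and observing the cancellation $H_1+H_2=\sin\beta/\|\mathbf{r}\|$. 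That Gauss-equation step is the load-bearing device of the proof; "switching to orbit-frame variables" does not by itself tell you how to eliminate $\ddot\beta$ in favor of $c$ and the physical variables.

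The genuine gap is subcase 2(a). The paper's argument is that for $A(\beta)=0$ to hold on a subinterval via $e=0$, the condition $e=0$ must \emph{persist}, hence $\dot e=0$ there; with $\cos\beta=0$ the thrust is purely transverse ($T_r=0$), so the Gauss equation for $\dot e$ degenerates to $\dot e = \pm 2\sqrt{\|\mathbf{r}\|/\mu}\,\cos\theta\, cT_{\text{max}}/m$, which forces $\cos\theta=0$ unless $c=0$. Your proposed mechanism, differentiating $\mathbf{p}_v\cdot\mathbf{r}=0$ along the flow and combining with $\mathbf{p}_v\cdot\mathbf{p}_r=0$, is a different computation and does not obviously produce $\cos\theta=0$: in the planar case with $\cos\beta=0$ one has $\mathbf{p}_r\parallel\mathbf{r}$ and, for a circular orbit, $\mathbf{p}_v\parallel\mathbf{v}$, so $\tfrac{\text{d}}{\text{d}t}(\mathbf{p}_v\cdot\mathbf{r})=-\mathbf{p}_r\cdot\mathbf{r}+\mathbf{p}_v\cdot\mathbf{v}=0$ only relates the magnitudes $\|\mathbf{p}_r\|\|\mathbf{r}\|$ and $\|\mathbf{p}_v\|\|\mathbf{v}\|$, with no $\theta$-dependence in sight. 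The missing idea is the persistence of the degeneracy condition itself ($\dot e=0$), not a further costate identity. Subcase 2(b) you get essentially right, but the claim $\dot\theta\neq 0$ should be justified from the Gauss equation for $\dot\theta$ with $T_r=0$ and $\sin\theta=0$ (as the paper does), not from unperturbed Keplerian motion, since the orbit is thrusted on $I_s$.
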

The terms $B$ and $A$ will be given explicitly in the proof of the theorem. They only depend on the physical variables $\|\mathbf{r}\|, \beta, e, \theta$, and $m$. Therefore, the computation of the singular control $c_s$ can be easily performed in case singular arcs appear while solving a LTO problem. Nonetheless, we will show that the factor $c_s$ is actually singular (i.e, $0 < c_s < 1$) in limited regions of interest of the state space. 
\subsection{Main Lemmas and Corollaries}
\begin{lemma}
    If $S = 0$ on $I_s$, then the costates $\mathbf{p}_v$ an $\mathbf{p}_r$ are perpendicular on $I_s$.
    \label{lemma2}
\end{lemma}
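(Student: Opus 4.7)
The plan is to differentiate the switching function in time along the singular arc and exploit the fact that $S\equiv 0$ on $I_s$ forces $\dot S\equiv 0$ as well. Lemma \ref{lemma1} guarantees $\mathbf{p}_v\neq \mathbf{0}$ on $I_s$, so the reduced form \cref{Sred},
\[
S \;=\; \frac{\|\mathbf{p}_v\|}{m} \;-\; \frac{p_m}{I_{\text{sp}} g_0},
\]
is legitimate there, and $\|\mathbf{p}_v\|$ is differentiable on $I_s$.

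I would then compute $\dot S$ term by term. Using $\dot{\|\mathbf{p}_v\|} = \mathbf{p}_v\cdot\dot{\mathbf{p}}_v / \|\mathbf{p}_v\|$ together with $\dot{\mathbf{p}}_v = -\mathbf{p}_r$ from \cref{costatedynamics}, the first contribution becomes
\[
\frac{d}{dt}\!\left(\frac{\|\mathbf{p}_v\|}{m}\right) \;=\; -\,\frac{\mathbf{p}_v\cdot \mathbf{p}_r}{m\,\|\mathbf{p}_v\|} \;-\; \frac{\|\mathbf{p}_v\|\,\dot m}{m^{2}},
\]
and plugging in $\dot m = -T_{\max}c/(I_{\text{sp}} g_0)$ gives a $c$-dependent piece equal to $+\|\mathbf{p}_v\|\, T_{\max}\, c/(I_{\text{sp}} g_0\, m^{2})$. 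The second contribution is $-\dot p_m/(I_{\text{sp}} g_0) = -\|\mathbf{p}_v\|\, T_{\max}\, c/(I_{\text{sp}} g_0\, m^{2})$, which exactly cancels the mass-rate term. Thus
\[
\dot S \;=\; -\,\frac{\mathbf{p}_v\cdot \mathbf{p}_r}{m\,\|\mathbf{p}_v\|}.
\]

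Since $\dot S = 0$ on $I_s$ and $m,\|\mathbf{p}_v\|>0$ there, I conclude $\mathbf{p}_v\cdot \mathbf{p}_r = 0$ on $I_s$, which is the claimed orthogonality.

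There is no real obstacle: the only point that requires care is verifying that $\mathbf{p}_v\neq\mathbf{0}$ (so that the reduced switching function and $\dot{\|\mathbf{p}_v\|}$ make sense), which is exactly the content of Lemma \ref{lemma1}, and checking that the $c$-dependent contributions cancel—a consequence of the fact that the singular condition $S=0$ combined with \cref{Sred} makes the two terms in $\dot S$ involving $\dot m$ and $\dot p_m$ share the same coefficient. This independence of $\dot S$ from $c$ is also what makes the derivative relation usable along the singular arc without yet knowing $c_s$.
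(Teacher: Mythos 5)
Your proof is correct and follows essentially the same route as the paper: differentiate the reduced switching function \cref{Sred} along $I_s$, observe that the $\dot m$ and $\dot p_m$ contributions cancel, and conclude $\mathbf{p}_v\cdot\mathbf{p}_r=0$ from $\dot S=0$. The only cosmetic difference is that you make explicit the justification (via Lemma \ref{lemma1}) that $\|\mathbf{p}_v\|$ is nonzero and differentiable, which the paper leaves implicit.
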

\begin{proof}
    The result from Lemma \ref{lemma1} allows to write the switching function as in Eq.\ \cref{Sred}. Moreover, it must also be
\begin{equation}
    \Dot{S} = -\frac{\|\mathbf{p}_v\|}{m^2}\Dot{m} +  \frac{\mathbf{p}_v \cdot \Dot{\mathbf{p}}_v}{\|\mathbf{p}_v\| m} - \frac{\Dot{p}_m}{I_{\text{sp}}g_0} = 0
\end{equation}
By substituting the expressions for $\Dot{m}$ and $\Dot{p}_m$ contained in Eqs.\ \cref{dynamics1} and \cref{costatedynamics}, it is obtained
\begin{equation}
        \Dot{S} = \frac{\|\mathbf{p}_v\|}{m^2}\frac{T_{\text{max}}}{I_{\text{sp}}g_0}c + \frac{\mathbf{p}_v \cdot \Dot{\mathbf{p}}_v}{\|\mathbf{p}_v\| m} - \frac{\|\mathbf{p}_v\|}{m^2}\frac{T_{\text{max}}}{I_{\text{sp}}g_0}c = \frac{\mathbf{p}_v \cdot \Dot{\mathbf{p}}_v}{\|\mathbf{p}_v\| m}
\end{equation}
Using the second costate equation in Eq.\ \cref{costatedynamics}, one gets
\begin{equation}
    D_1 = \mathbf{p}_v \cdot \Dot{\mathbf{p}}_v = -\mathbf{p}_v \cdot \mathbf{p}_r = 0 
    \label{D1}
\end{equation}
Therefore, the costates $\mathbf{p}_v$ and $\mathbf{p}_r$ are perpendicular along singular arcs.
\end{proof}
\begin{lemma}
Let a LTO problem be described by Eq.\ \cref{OCP}, with the further assumption of planar dynamics. Moreover, let $\alpha$ indicate the angle between the reference direction and the spacecraft radius $\mathbf{r}$. If $S = 0$ on $I_s$, then on $I_s$ it must be
\begin{equation}
     -\|\mathbf{r}\|^3(\Dot{\alpha} + \Dot{\beta})^2 + \mu (1 - 3\cos^2 \beta) = 0
\end{equation}
\label{lemma3}
\end{lemma}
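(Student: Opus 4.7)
The plan is to extract a second-order consequence of $S=0$ by differentiating the perpendicularity relation established in Lemma \ref{lemma2}, then translating the resulting costate identity into the geometric quantities $(\alpha,\beta,\|\mathbf{r}\|)$ using Lemma \ref{lemma1}. Concretely, I will start from $D_1 = \mathbf{p}_v \cdot \mathbf{p}_r = 0$ on $I_s$, differentiate once along the flow, and substitute the costate dynamics \cref{costatedynamics}. The term $\dot{\mathbf{p}}_v \cdot \mathbf{p}_r$ becomes $-\|\mathbf{p}_r\|^2$, and $\mathbf{p}_v \cdot \dot{\mathbf{p}}_r$ produces $-\frac{3\mu}{\|\mathbf{r}\|^5}(\mathbf{r}\cdot \mathbf{p}_v)^2 + \frac{\mu}{\|\mathbf{r}\|^3}\|\mathbf{p}_v\|^2$.

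The second step is to remove the dependence on $\mathbf{r}\cdot \mathbf{p}_v$. Since Lemma \ref{lemma1} gives $\mathbf{n}^* = \mathbf{p}_v/\|\mathbf{p}_v\|$ on $I_s$, and $\beta$ is the angle between $\mathbf{n}^*$ and $\mathbf{r}$, we have $\mathbf{r}\cdot\mathbf{p}_v = \|\mathbf{r}\|\|\mathbf{p}_v\|\cos\beta$. Substituting yields a clean identity
\begin{equation*}
\|\mathbf{p}_r\|^2 = \frac{\mu\,\|\mathbf{p}_v\|^2}{\|\mathbf{r}\|^3}\bigl(1 - 3\cos^2\beta\bigr).
\end{equation*}

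The third step connects $\|\mathbf{p}_r\|$ to the angular rates $\dot\alpha+\dot\beta$. In the planar case, write $\mathbf{p}_v = \|\mathbf{p}_v\|\,(\cos(\alpha+\beta),\sin(\alpha+\beta))^{\!\top}$, since $\mathbf{p}_v$ is parallel to $\mathbf{n}^*$ and $\mathbf{n}^*$ makes angle $\alpha+\beta$ with the reference direction. Differentiating gives $\dot{\mathbf{p}}_v$ as the sum of a radial component with magnitude $\frac{d}{dt}\|\mathbf{p}_v\|$ and a tangential component with magnitude $\|\mathbf{p}_v\|(\dot\alpha+\dot\beta)$. Crucially, $\dot{\mathbf{p}}_v = -\mathbf{p}_r$ and Lemma \ref{lemma2} force $\dot{\mathbf{p}}_v \perp \mathbf{p}_v$, which kills the radial component, so $\tfrac{d}{dt}\|\mathbf{p}_v\| = 0$ and $\|\mathbf{p}_r\|^2 = \|\mathbf{p}_v\|^2(\dot\alpha+\dot\beta)^2$.

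The final step combines the two expressions for $\|\mathbf{p}_r\|^2$ and divides by $\|\mathbf{p}_v\|^2$, which is legal by Lemma \ref{lemma1}, giving exactly $-\|\mathbf{r}\|^3(\dot\alpha+\dot\beta)^2 + \mu(1-3\cos^2\beta)=0$. The main obstacle I anticipate is the geometric bookkeeping in step three: one has to be careful that $\alpha+\beta$ really is the polar angle of $\mathbf{p}_v$ with the chosen sign convention for $\beta$, and that the vanishing of the radial derivative of $\|\mathbf{p}_v\|$ is a direct consequence of Lemma \ref{lemma2} rather than an extra assumption. Everything else is a straightforward substitution of the costate equations \cref{costatedynamics}.
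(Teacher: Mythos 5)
Your proposal is correct and follows essentially the same route as the paper: differentiate $D_1=\mathbf{p}_v\cdot\mathbf{p}_r=0$ along the flow, substitute the costate dynamics, use $\mathbf{r}\cdot\mathbf{p}_v=\|\mathbf{r}\|\|\mathbf{p}_v\|\cos\beta$ from Lemma \ref{lemma1}, and then identify $\|\mathbf{p}_r\|=\|\mathbf{p}_v\||\dot\alpha+\dot\beta|$ by writing $\mathbf{p}_v$ in polar form and killing the radial component via Lemma \ref{lemma2}. The geometric bookkeeping you flag (that $\alpha+\beta$ is the polar angle of $\mathbf{p}_v$, denoted $\delta$ in the paper's figure, and that $\tfrac{d}{dt}\|\mathbf{p}_v\|=0$ follows from $\mathbf{p}_v\cdot\dot{\mathbf{p}}_v=-\mathbf{p}_v\cdot\mathbf{p}_r=0$ rather than being an extra assumption) is exactly how the paper resolves it.
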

\begin{proof}
Using the result of Lemma \ref{lemma2}, from the hypothesis $S = 0$ follows that $\Dot{D}_1 = 0$ (see Eq.\ \cref{D1}). Consequently,   
\begin{equation}
\begin{split}
        \Dot{D}_1 & = \Dot{\mathbf{p}}_v \cdot \mathbf{p}_r + \mathbf{p}_v \cdot \Dot{\mathbf{p}}_r \\
        & = -\mathbf{p}_r \cdot \mathbf{p}_r + \mathbf{p}_v \cdot \left[  -\frac{3\mu}{\|\mathbf{r}\|^5}(\mathbf{r} \cdot \mathbf{p}_v) \mathbf{r} + \frac{\mu}{\|\mathbf{r}\|^3}\mathbf{p}_v \right] \\
        & = -\|\mathbf{p}_r\|^2 - \frac{3\mu}{\|\mathbf{r}\|^5}(\mathbf{r} \cdot \mathbf{p}_v) (\mathbf{r} \cdot \mathbf{p}_v) + \frac{\mu}{\|\mathbf{r}\|^3}(\mathbf{p}_v \cdot \mathbf{p}_v) = 0\\
        \end{split}
        \label{D1dotfirst}
        \end{equation}
Introducing the hypothesis of planar dynamics, the angles described in Fig.\ \ref{fig:angles} can be defined. Therefore,
\begin{equation}
    \begin{split}
       -\|\mathbf{p}_r\|^2 - \frac{3\mu}{\|\mathbf{r}\|^3} \|\mathbf{p}_v\|^2 \cos^2 \beta + \frac{\mu}{\|\mathbf{r}\|^3} \|\mathbf{p}_v\|^2 = 0
       \end{split}
\end{equation}
From which comes
\begin{equation}
    \begin{split}
     -\|\mathbf{r}\|^3\|\mathbf{p}_r\|^2 + \mu \|\mathbf{p}_v\|^2 (1 - 3\cos^2 \beta) = 0.
\end{split}
\label{D1dot}
\end{equation}
\begin{figure*}[h]
	\centering
	\includegraphics[]{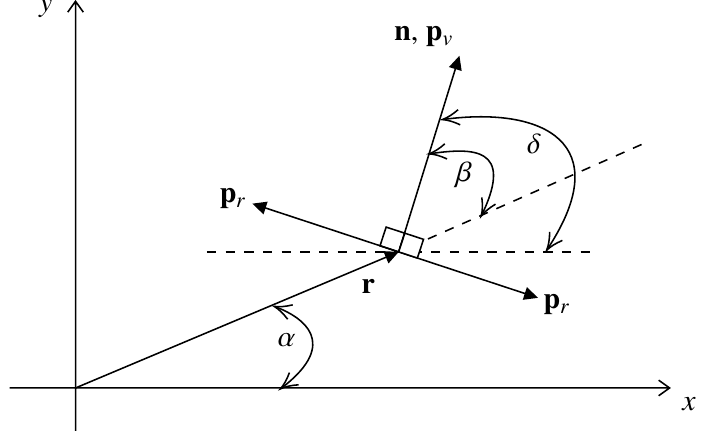}
	\caption{Definition of the angles.}
	\label{fig:angles}
\end{figure*}
Consider the costate $\mathbf{p}_v = \|\mathbf{p}_v\| \mathbf{q}$, where $\|\mathbf{q}\| = 1$. Deriving the vector with respect to time yields
\begin{equation}
   \Dot{\mathbf{p}}_v = \frac{\text{d}}{\text{d}t}\|\mathbf{p}_v\|\mathbf{q} + \|\mathbf{p}_v\| \Dot{\mathbf{q}} =  \frac{\text{d}}{\text{d}t}\|\mathbf{p}_v\| \mathbf{q} + \|\mathbf{p}_v\| \Dot{\delta} \mathbf{s},
\end{equation}
where $\mathbf{s}$ is a unitary vector perpendicular to $\mathbf{q}$. The dynamics of the velocity costate in Eqs.\ \cref{costatedynamics} and Eq.\ \cref{D1} show that the component of $ \Dot{\mathbf{p}}_v$ along $\mathbf{q}$ must be zero. In turn,
\begin{equation}
    \Dot{\mathbf{p}}_v = -\mathbf{p}_r = \|\mathbf{p}_v\| \Dot{\delta} \mathbf{s}
\end{equation}
and
\begin{equation}
    \|\Dot{\mathbf{p}}_v\| = \|\mathbf{p}_r\| = \|\mathbf{p}_v\| |\Dot{\delta}|
    \label{pr}
\end{equation}
From Fig.\ \ref{fig:angles}, it can be observed that the angle $\delta = \alpha + \beta$, so
\begin{equation}
   \|\mathbf{p}_r\| = \|\mathbf{p}_v\| |\Dot{\alpha} + \Dot{\beta}|
\end{equation}
Finally, Eq.\ \cref{D1dot} becomes
\begin{equation}
    \begin{split} 
         0 & = -\|\mathbf{r}\|^3 \|\mathbf{p}_v\|^2 (\Dot{\alpha} + \Dot{\beta})^2 + \mu \|\mathbf{p}_v\|^2 (1 - 3\cos^2 \beta)\\ 
         & = -\|\mathbf{r}\|^3(\Dot{\alpha} + \Dot{\beta})^2 + \mu (1 - 3\cos^2 \beta) = D_2
    \end{split}
    \label{D2}
\end{equation}
\end{proof}
\begin{corollary}
In order to have singular arcs, it must be verified
        \begin{equation}
        1 - 3\cos^2 \beta \geq 0
    \end{equation}
    \label{corollary1}
\end{corollary}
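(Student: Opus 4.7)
The plan is to derive the inequality directly from Lemma \ref{lemma3}, which already provides the identity $-\|\mathbf{r}\|^3(\dot{\alpha}+\dot{\beta})^2 + \mu(1 - 3\cos^2\beta) = 0$ on $I_s$. The idea is to isolate the term $\mu(1 - 3\cos^2\beta)$ on one side and observe that the remaining side is manifestly non-negative.

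Concretely, I would rearrange $D_2 = 0$ as
\begin{equation*}
\mu(1 - 3\cos^2\beta) = \|\mathbf{r}\|^3(\dot{\alpha}+\dot{\beta})^2.
\end{equation*}
The right-hand side is the product of $\|\mathbf{r}\|^3 \geq 0$ (in fact strictly positive, since the spacecraft does not collide with the primary body) and the square $(\dot{\alpha}+\dot{\beta})^2 \geq 0$, hence non-negative. Since the gravitational parameter satisfies $\mu > 0$, dividing both sides by $\mu$ preserves the inequality and yields $1 - 3\cos^2\beta \geq 0$, which is the claim.

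There is no real obstacle here: the corollary is an immediate consequence of Lemma \ref{lemma3} together with the positivity of $\mu$ and $\|\mathbf{r}\|$. The only subtlety worth flagging is that the inequality is in fact saturated (equality) precisely when $\dot{\alpha}+\dot{\beta} = 0$, i.e., when the velocity-costate direction is inertially frozen; otherwise the inequality is strict. This observation is not needed for the corollary itself but is worth keeping in mind for the subsequent analysis, since the bound $\cos^2\beta \leq 1/3$ already restricts the admissible values of $\beta$ on singular arcs to a narrow band and thus supports the later claim that singular arcs are geometrically constrained.
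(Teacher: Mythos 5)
Your proposal is correct and is essentially the paper's own argument: the paper's proof of Corollary \ref{corollary1} simply states that the result follows directly from Lemma \ref{lemma3}, and your rearrangement of $D_2 = 0$ into $\mu(1-3\cos^2\beta) = \|\mathbf{r}\|^3(\dot\alpha+\dot\beta)^2 \geq 0$ is exactly the step being left implicit there. Your additional remark about when equality is attained is a harmless and reasonable observation, but not needed.
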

\begin{proof}
    The proof directly follows from Lemma \ref{lemma3}.
\end{proof}
\begin{remark}
    The condition in Corollary \ref{corollary1} corresponds to the following values of $\beta$:
\begin{equation}
\begin{split}
    I_\beta = I_\beta^1 \cup I_\beta^2 
\end{split}
\end{equation}
where
\begin{equation}
\begin{split}
    I_\beta^1 &= \left\{ \beta : \beta_0 + 2k \pi \leq \beta \leq \pi - \beta_0 + 2k \pi, k \in \mathbb{N} \right\} \\
    I_\beta^2 &= \left\{\beta_0 + (2k+1) \pi \leq \beta \leq 2\pi(k+1) - \beta_0, k \in \mathbb{N} \right\} 
\end{split}
\end{equation}
with $\beta_0 = \arccos{\sqrt{\frac{1}{3}}}$.
\label{thetavalues}
\end{remark}

\subsection{Proof of Theorem 1}
\begin{proof}
     In order to have singular arcs, it must be $\Ddot{D}_1 = 0$ (see Eq.\ \cref{D1dotfirst}). Considering Eq.\ \cref{D1dot},
\begin{equation}
\begin{split}
        -2\|\mathbf{r}\|^3 (\mathbf{p}_r \cdot \Dot{\mathbf{p}}_r) -  3 \|\mathbf{r}\|\|\mathbf{p}_r\|^2 (\mathbf{r} \cdot \Dot{\mathbf{r}}) + 2\mu(1 - 3\cos^2 \beta) (\mathbf{p}_v \cdot \Dot{\mathbf{p}}_v) + 6\mu \|\mathbf{p}_v\|^2 \cos \beta \sin \beta \Dot{\beta} = 0
\end{split}
\label{d2dot_costates}
\end{equation}
Using the result from Lemma \ref{lemma2}, the equation becomes
\begin{equation}
    \begin{split}
        D_3 = -2\|\mathbf{r}\|^3 (\mathbf{p}_r \cdot \Dot{\mathbf{p}}_r) -  3 \|\mathbf{r}\|\|\mathbf{p}_r\|^2 (\mathbf{r} \cdot \Dot{\mathbf{r}}) + 6\mu \|\mathbf{p}_v\|^2 \cos \beta \sin \beta \Dot{\beta} = 0
    \end{split}
    \label{D1ddot}
\end{equation}
Making use of the costate equation for $\mathbf{p}_r$, the term $(\mathbf{p}_r \cdot \Dot{\mathbf{p}}_r)$ can be expressed as
\begin{equation}
\begin{split}
        (\mathbf{p}_r \cdot \Dot{\mathbf{p}}_r) &= \mathbf{p}_r \cdot \left(-\frac{3\mu}{\|\mathbf{r}\|^5}(\mathbf{r} \cdot \mathbf{p}_v) \mathbf{r} + \frac{\mu}{\|\mathbf{r}\|^3}\mathbf{p}_v\right) \\
        &=  -\frac{3\mu}{\|\mathbf{r}\|^5}(\mathbf{r} \cdot \mathbf{p}_v) (\mathbf{r} \cdot \mathbf{p}_r)
        \\
        &= -\frac{3\mu}{\|\mathbf{r}\|^3}\|\mathbf{p}_v\| \|\mathbf{p}_r\| \cos \beta \cos \left(\beta {\color{blue}\pm} \frac{\pi}{2}\right) 
\end{split}
\end{equation}
In the above equation, Lemma \ref{lemma2} has been used to write $(\mathbf{r} \cdot \mathbf{p}_r)$  as $\|\mathbf{r}\| \|\mathbf{p}_r\| \cos\left(\beta {\color{blue}\pm} \frac{\pi}{2}\right)$. Note that as also shown in Fig.\ \ref{fig:angles}, the orientation of $\mathbf{p}_r$ is not known, and therefore both the signs $\pm$ need to be considered. Since $\cos\left(\beta {\color{blue}\pm} \frac{\pi}{2}\right)= {\color{blue}\mp} \sin \beta $, \begin{equation}
\begin{split}
         (\mathbf{p}_r \cdot \Dot{\mathbf{p}}_r) &= {\color{blue}\pm} \frac{3\mu}{\|\mathbf{r}\|^3}\|\mathbf{p}_v\| \|\mathbf{p}_r\| \cos \beta \sin \beta \\
         & = {\color{blue}\pm} \frac{3\mu}{\|\mathbf{r}\|^3}\|\mathbf{p}_v\|^2 |\Dot{\delta}| \cos \beta \sin \beta  \quad \leftarrow \text{from Eq.\ \cref{pr}}\\
         & = {\color{blue}\pm} \frac{3\mu}{\|\mathbf{r}\|^3}\|\mathbf{p}_v\|^2 \cos \beta \sin \beta |\Dot{\alpha} + \Dot{\beta}| \quad \leftarrow \text{from Fig.\ \ref{fig:angles}}
\end{split}
\label{prprdot}
\end{equation}
By substituting this expression back in Eq.\ \cref{D1ddot}:
\begin{equation}
\begin{split}
         & {\color{blue}\mp} 6\mu \|\mathbf{p}_v\|^2 \cos \beta \sin \beta| \Dot{\alpha} + \Dot{\beta}| -  3 \|\mathbf{r}\| \|\mathbf{p}_r\|^2 (\mathbf{r} \cdot \Dot{\mathbf{r}}) + 6\mu \|\mathbf{p}_v\|^2 \cos \beta \sin \beta \Dot{\beta} = 0\\
         &  {\color{blue}\mp} 6\mu \|\mathbf{p}_v\|^2 \cos \beta \sin \beta |\Dot{\alpha} + \Dot{\beta}|-  3 \|\mathbf{r}\| \|\mathbf{p}_v\|^2 (\Dot{\alpha} + \Dot{\beta})^2 (\mathbf{r} \cdot \Dot{\mathbf{r}}) + 6\mu \|\mathbf{p}_v\|^2 \cos \beta \sin \beta \Dot{\beta} = 0\\
        &   {\color{blue}\mp}2 \mu \cos \beta \sin \beta |\Dot{\alpha} + \Dot{\beta}|-  \|\mathbf{r}\| (\Dot{\alpha} + \Dot{\beta})^2 (\mathbf{r} \cdot \Dot{\mathbf{r}}) + 2\mu \cos \beta \sin \beta \Dot{\beta} = 0
\end{split}
\label{eq:3rdinter}
\end{equation}
From Lemma \ref{lemma3}, 
\begin{equation}
    (\Dot{\alpha} + \Dot{\beta})^2 = \frac{\mu}{\|\mathbf{r}\|^3} (1 - 3\cos^2 \beta)
    \label{alphaDbetaD_squared}
\end{equation}
\begin{equation}
    |\Dot{\alpha} + \Dot{\beta}| = \sqrt{\frac{\mu}{\|\mathbf{r}\|^3} (1 - 3\cos^2 \beta)}
    \label{alphaDbetaD_mod}
\end{equation}
\begin{equation}
    \Dot{\beta} = -\Dot{\alpha} {\color{red}\pm} \sqrt{\frac{\mu}{\|\mathbf{r}\|^3} (1 - 3\cos^2 \beta)}
\end{equation}
Now, we want to express $\Dot{\alpha}$ as a function of physical variables only, such as orbital parameters. In case of planar transfers, the angle $\alpha$ can be expressed as \cite{curtis2013orbital}
\begin{equation}
    \alpha = \theta + \omega,
    \label{alphathetaomega}
\end{equation}
where $\omega$ is the argument of periapsis, which is defined as \cite{curtis2013orbital}
\begin{equation}
   \omega = \arccos{\frac{\mathbf{N} \cdot \mathbf{e}}{\|\mathbf{N}\| \|\mathbf{e}\|}}
    \label{omegadef}
\end{equation}
$\mathbf{N}$ is a vector pointing towards the ascending node of the orbit, and $\mathbf{e}$ is the eccentricity vector. In case of a planar orbit, $\mathbf{N}$ is undefined. By convention, it is assumed that it coincides with the reference direction $x$ and therefore the angle $\omega$ is defined as the angle between the reference direction and the eccentricity vector, hence Eq.\ \cref{omegadef}. By deriving Eq.\ \cref{alphathetaomega}:
\begin{equation}
\begin{split}
        \Dot{\alpha} & = \Dot{\theta} + \Dot{\omega} = \Dot{\theta}_{\text{2B}} + \Dot{\theta}_{\text{P}} + \Dot{\omega}_{\text{2B}} + \Dot{\omega}_{\text{P}} \end{split}
        \end{equation}
where the terms related to the two-body motion and to orbital perturbations have been highlighted and identified with $(\cdot)_{\text{2B}}$ and $(\cdot)_{\text{P}}$, respectively. In case of planar orbits, it can be proved that $\Dot{\omega}_{\text{P}} = - \Dot{\theta}_{\text{P}}$ \cite{curtis2013orbital}. Moreover, since in two-body motion the argument of periapsis does not change, $\Dot{\omega}_{\text{2B}} = 0$. Therefore,
\begin{equation}
    \begin{split}
        \dot{\alpha}
        = \Dot{\theta}_{\text{2B}} = \frac{\|\mathbf{h}\|}{\|\mathbf{r}\|^2} = \sqrt{\frac{\mu}{\|\mathbf{r}\|^3}(1 + e \cos \theta)}
\end{split}
\label{alphadot}
\end{equation}
where 
\begin{equation}
    \|\mathbf{h}\| = \sqrt{\mu \|\mathbf{r}\|(1 + e \cos \theta)}
    \label{h}
\end{equation}
is the norm of the orbital specific angular momentum \cite{curtis2013orbital}.
Therefore, one can write $\Dot{\beta}$ as
\begin{equation}
    \Dot{\beta} = \sqrt{\frac{\mu}{\|\mathbf{r}\|^3}(1 + e \cos \theta)} \left(-1 {\color{red}\pm} \sqrt{\frac{1 - 3 \cos^2 \beta}{1 + e \cos \theta}} \right) 
    \label{betaD}
\end{equation}
Moreover, we have that
\begin{equation}
    (\mathbf{r} \cdot \Dot{\mathbf{r}}) = (\mathbf{r} \cdot \mathbf{v}) = \|\mathbf{r}\| v_r = \|\mathbf{r}\| \frac{\mu}{\|\mathbf{h}\|} e \sin \theta = e \sin \theta\sqrt{\frac{\mu \|\mathbf{r}\|}{1 + e \cos \theta}},
    \label{rv}
\end{equation}
where 
\begin{equation}
    v_r = \frac{\mu}{\|\mathbf{h}\|} e \sin \theta
    \label{vr}
\end{equation} 
is the component of the velocity along the radius direction \cite{curtis2013orbital}.
By substituting Eqs.\ \cref{alphaDbetaD_squared,alphaDbetaD_mod,betaD,rv} in Eq.\ \cref{eq:3rdinter} we obtain
\begin{equation}
\begin{split}
         \Psi(e,\theta, \beta) = 2 \cos \beta \sin \beta \left( -1 {\color{red} \pm} \sqrt{\frac{1 - 3 \cos^2 \beta}{1 + e \cos \theta}} {\color{blue} \mp} \sqrt{\frac{1 - 3 \cos^2 \beta}{1 + e \cos \theta}} 
 \right) - (1 - 3\cos^2 \beta) \frac{e \sin \theta}{1 + e \cos \theta} = 0
\end{split}
\label{algebraic}
\end{equation}
\end{proof}
Figure \ref{fig:constr} shows $\Psi$ as a function of $\beta$ for $e = 0.2$ and $\theta = \frac{5}{12}\pi$ for the different combinations of the signs.
\begin{remark}
    It can be observed that:
\begin{enumerate}
    \item the function is only defined when $1 - 3\cos^2 \beta \geq 0$, and therefore in the two sub-domains of $[0, 2\pi]$ defined in Remark \ref{thetavalues};
    \item since $\cos (\beta + \pi) = -\cos \beta$ and $\sin (\beta + \pi) = -\sin \beta$, the function has the same behaviour for both the sub-domains.
    \item when $\beta = \beta_0$, the function $\Psi = 2 (\cos^2 \beta_0 - \sin^2 \beta_0) < 0$ and when $\beta = \pi - \beta_0$ then $\Psi = 2 [\cos^2 (\pi - \beta_0) - \sin^2 (\pi - \beta_0)] > 0$.
\end{enumerate}
\label{remark3}
\end{remark}
\begin{corollary}
    For a fixed couple $\left(\Bar{e}, \Bar{\theta}\right)$, the equation $\Psi\left(\Bar{e}, \Bar{\theta}, \beta\right) = 0$ defined in Theorem \ref{theorem1} has at least 6 zeros and at most 10.
    \label{corollary2}
\end{corollary}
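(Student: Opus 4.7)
My plan is to split \cref{algebraici} according to the four choices of the red $\pm$ and blue $\mp$ signs: two of them make the two $\sqrt{\cdot}$ contributions cancel, producing the common equation $\Psi_0(\beta)=0$ with
\[
\Psi_0=-\sin(2\beta)-C(1-3\cos^{2}\beta),\qquad C=\frac{e\sin\theta}{1+e\cos\theta},
\]
while the two remaining choices yield $\Psi_{\pm}(\beta)=\sin(2\beta)(-1\pm 2K)-C(1-3\cos^{2}\beta)=0$ with $K=\sqrt{(1-3\cos^{2}\beta)/(1+e\cos\theta)}$. Since each of $\Psi_0,\Psi_{\pm}$ depends on $\beta$ only through $\cos^{2}\beta$ and $\cos\beta\sin\beta$, all three are $\pi$-periodic in $\beta$, so the zero counts in $I_{\beta}^{1}$ and $I_{\beta}^{2}$ coincide; it suffices to bound the zeros in a single sub-domain and double.

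For the lower bound I would apply the intermediate value theorem. At the endpoints $\beta_{0}$ and $\pi-\beta_{0}$ of $I_{\beta}^{1}$ we have $1-3\cos^{2}\beta=0$, hence $K=0$, and each of $\Psi_0,\Psi_{+},\Psi_{-}$ reduces to $\mp 2\cos\beta_{0}\sin\beta_{0}$ with opposite signs at the two endpoints, exactly as recorded in Remark \ref{remark3}. Each of the three variants therefore admits at least one zero in $I_{\beta}^{1}$, producing at least $3\cdot 2=6$ zeros of \cref{algebraici} in $I_{\beta}$.

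For the upper bound I would treat each variant separately. For $\Psi_0$ one has $\Psi_0'(\beta)=-2\cos(2\beta)-3C\sin(2\beta)$, so critical points solve $\tan(2\beta)=-2/(3C)$; since $2\beta\in[2\beta_{0},2\pi-2\beta_{0}]\subset(\pi/2,3\pi/2)$, where $\tan$ is strictly monotone, there is exactly one critical point, and by Rolle's theorem $\Psi_0$ has at most $2$ zeros per sub-domain, i.e.\ at most $4$ in $I_{\beta}$. For $\Psi_{\pm}$ I would isolate the $K$-dependent term in each equation and square once to eliminate $\sqrt{\lambda}$ (with $\lambda=1-3\cos^{2}\beta$), then square again to eliminate the residual sign of $\sin(2\beta)$, obtaining an explicit polynomial equation of degree $6$ in $y=\sqrt{\lambda}\in[0,1]$. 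Applying Descartes' rule of signs to this polynomial and to its $\Psi_-$-counterpart, and discarding the spurious roots introduced by the squarings via the sign-consistency requirement on $\sin(2\beta)$ in the unsquared equation, bounds the combined number of genuine zeros of $\Psi_{+}$ and $\Psi_{-}$ by $3$ per sub-domain, hence at most $6$ in $I_{\beta}$. Summing gives $4+6=10$.

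The hardest step will be the last one. The naive degree bound on the degree-$6$ polynomial admits up to $6$ positive roots, and a direct Descartes count on the polynomial governing $\Psi_{+}$ alone can leave room for as many as $5$; the reduction to $3$ (combined with $\Psi_{-}$) depends on identifying which squared-polynomial roots are consistent with the sign of $\sin(2\beta)$ prescribed by the original unsquared equation on each of the sub-sub-intervals $I_{1a}=[\beta_{0},\pi/2]$ and $I_{1b}=[\pi/2,\pi-\beta_{0}]$. The combinatorics of this sign compatibility, combined with the observation that the polynomial governing $\Psi_{-}$ has only a single Descartes sign change (so contributes at most one root per sub-domain), is where I expect the bulk of the algebraic work to concentrate.
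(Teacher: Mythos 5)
Your overall architecture coincides with the paper's: split Eq.\ \cref{algebraici} into the three distinct sign variants (the two cancelling combinations collapse to one function $\Psi_0$, plus the $-1+2K$ and $-1-2K$ variants), exploit the $\pi$-periodicity to work on $I_\beta^1$ and double, and obtain the lower bound of three zeros per sub-domain from the common sign change $\Psi(\beta_0)<0<\Psi(\pi-\beta_0)$ of Remark \ref{remark3}. That much is sound and is essentially what the paper does.

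The upper bound, however, is allocated incorrectly between the variants, and the step you identify as the hardest is aiming at a false statement. Your own Rolle argument for $\Psi_0$ in fact yields \emph{exactly} one zero per sub-domain, not two: with a single critical point in $I_\beta^1$ the function is monotone on two pieces, and since it runs from a negative value at $\beta_0$ to a positive value at $\pi-\beta_0$, exactly one of those pieces crosses zero (equivalently, $\Psi_0=\tfrac{C}{2}+\tfrac{3C}{2}\cos 2\beta-\sin 2\beta$ is a shifted sinusoid in $2\beta$ over an interval shorter than one period). So $\Psi_0$ contributes $2$ zeros to $I_\beta$, not $4$, and to reach the attainable maximum of ten zeros --- which the paper's numerical section reports is realized for every $(e,\theta)$ with $e\le 0.5$ --- the variants $\Psi_+$ and $\Psi_-$ must together contribute \emph{four} zeros per sub-domain. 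Indeed the paper shows that $\Psi_-$ (both signs negative) has exactly one zero, while $\Psi_+$ (both signs positive) can have up to three: its derivative is positive near both endpoints $\beta_0$, $\pi-\beta_0$ and negative on a middle interval $(\beta_1,\beta_2)$, so the function is up--down--up and can cross zero three times. Your proposed Descartes-plus-sign-consistency reduction of the combined $\Psi_+,\Psi_-$ count to three per sub-domain therefore cannot be completed; no bookkeeping of spurious roots introduced by squaring will establish a bound that the functions actually violate. The repair is to redistribute the budget: prove $\Psi_0$ has exactly one zero (you already have the tools), $\Psi_-$ at most one, and $\Psi_+$ at most three, giving $1+1+3=5$ per sub-domain and hence the stated range of $6$ to $10$.
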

\begin{proof}
The function $\Psi\left(\Bar{e}, \Bar{\theta}, \beta\right) = 0$ should be studied in the interval $I_\beta$. However, due to the second point of Remark \ref{remark3}, it can be studied inside the interval $I_\beta^1$ and the same results apply for $I_\beta^2$.
Let us compute the derivative of the function $\Psi(\Bar{e}, \Bar{\theta}, \beta)$ with respect to $\beta$:
\begin{equation}
\begin{split}
        \frac{\text{d}\Psi}{\text{d} \beta}(\Bar{e}, \Bar{\theta}, \beta) &= 2(\cos^2 \beta - \sin^2 \beta) \left( -1 {\color{red} \pm} \sqrt{\frac{1 - 3 \cos^2 \beta}{1 + \Bar{e} \cos \Bar{\theta}}} {\color{blue} \mp} \sqrt{\frac{1 - 3 \cos^2 \beta}{1 + \Bar{e} \cos \Bar{\theta}}}\right)+6\frac{ \cos^2 \beta \sin^2 \beta}{\sqrt{1 - 3\cos^2 \beta}} \left({\color{red} \pm} \frac{1}{\sqrt{1 + \Bar{e} \cos \Bar{\theta}}} {\color{blue} \mp} \frac{1}{\sqrt{1 + \Bar{e} \cos \Bar{\theta}}} \right)  \\
        & -6\cos \beta \sin \beta \frac{e \sin \Bar{\theta}}{1 + \Bar{e} \cos \Bar{\theta}}
\end{split}
\end{equation}
Now, depending on the signs inside the function, we have three cases, correspondent to the three cases in Fig.\ \ref{fig:constr}. 
\subsubsection{Opposite Signs}
Let us first consider the case in which the two signs are opposite. We get:
\begin{equation}
\begin{split}
         \frac{\text{d}\Psi_2}{\text{d} \beta} &= -2(\cos^2 \beta - \sin^2 \beta) - 6\cos \beta \sin \beta \frac{e \sin \Bar{\theta}}{1 + \Bar{e} \cos \Bar{\theta}} = -2(\cos^2 \beta - \sin^2 \beta) - 6\cos \beta \sin \beta \Gamma, \quad \Gamma = \text{const.}
\end{split}
\end{equation}
Let us analyze the first term of the function, i.e., $T_1 = -2(\cos^2 \beta - \sin^2 \beta)$. Inside $I_\beta^1$, it is always positive, has a maximum in $\beta = \pi/2$, and it is symmetric with respect to the $\beta = \pi/2$ axis. The term $T_2 = - 6\cos \beta \sin \beta \Gamma$ inside the same interval is instead monotonic and
\begin{itemize}
    \item[1.] negative in $ I_\beta^{(1,1)} = \left\{\beta_0 \leq \beta < \pi/2\right\}$ and positive in the interval $ I_\beta^{(1,2)} = \left\{ \pi/2 <  \beta \leq \pi - \beta_0\right\}$ if $\Gamma > 0$;
    \item[2.] positive in $I_\beta^{(1,1)} = \left\{\beta_0 \leq \beta < \pi/2\right\}$ and negative in the interval $ I_\beta^{(1,2)} = \left\{ \pi/2 <  \beta \leq \pi - \beta_0\right\}$ if $\Gamma < 0$;
    \item[3.] constantly zero if $\Gamma = 0$.
\end{itemize}  
\paragraph{Case $\Gamma > 0$} If $|T_2(\beta_0)| > |T_1(\beta_0)|$, the derivative $\frac{\text{d}\Psi_2}{\text{d} \beta}$ is negative at $\beta_0$. As $T_2$ increases, the derivative crosses zero and becomes positive. Since $T_1 > 0$ and $T_2$ is monotonic and using the third point of Remark \ref{remark3}, the function has exactly one root. If $|T_2(\beta_0)| < |T_1(\beta_0)|$ the derivative is always positive. Therefore, the function $\Psi$ only has one root.
\paragraph{Case $\Gamma < 0$} In this case, the derivative of $\Psi$ is always positive in $I_\beta^{(1,1)}$. The derivative of $\Psi$ can change sign at most once in $I_\beta^{(1,2)}$ because $T_2$ is monotonic and $T_1 > 0$ and, using the third point of Remark \ref{remark3}, $\Psi$ at $\beta = \pi - \beta_0$ is positive. It follows that the function has only one root.
\paragraph{Case $\Gamma = 0$} In this case, $T_2 \equiv 0$ and the derivative of $\Psi$ is always positive. It follows that $\Psi$ has one root only.
\subsubsection{Both Signs Negative}
Consider now the case when both signs are negative. The derivative of $\Psi$ takes the form
\begin{equation}
\begin{split}
        \frac{\text{d}\Psi_3}{\text{d} \beta} &= -2(\cos^2 \beta - \sin^2 \beta) \left(1 +2\sqrt{\frac{1 - 3 \cos^2 \beta}{1 + \Bar{e} \cos \Bar{\theta}}} \right)- 12 \frac{\cos^2 \beta \sin^2 \beta}{\sqrt{1 - 3\cos^2 \beta}\sqrt{1 + \Bar{e} \cos \Bar{\theta}}} -6\cos \beta \sin \beta \Gamma = T_1 + T_2 + T_3
\end{split}
\end{equation}
First note that when $\beta \rightarrow \beta_0$ and $\beta \rightarrow \pi - \beta_0$, then $\frac{\text{d}\Psi_3}{\text{d} \beta} \rightarrow -\infty$. Let us consider two values $\beta_1, \beta_2$ such that
\begin{enumerate}
    \item $ \frac{\text{d}\Psi_3}{\text{d} \beta} < 0$ in $(\beta_0, \beta_1)$ and $\frac{\text{d}\Psi_3}{\text{d} \beta} = 0$ at $\beta = \beta_1 < \pi/2$;
    \item $ \frac{\text{d}\Psi_3}{\text{d} \beta} < 0$ in $(\beta_2, \pi - \beta_0)$ and $\frac{\text{d}\Psi_3}{\text{d} \beta} = 0$ at $\beta = \beta_2 > \pi/2$;
    \item $\frac{\text{d}\Psi_3}{\text{d} \beta} $ can change sign in $(\beta_1, \beta_2)$.
\end{enumerate}
We want to show that $\frac{\text{d}\Psi_3}{\text{d} \beta} > 0$ always in $(\beta_1, \beta_2)$, from which would follow that $\Psi_3$ has only one root. The following statements hold in the interval $I_\beta^1$:
\begin{enumerate}
    \item $T_1(\beta) = T_1(\pi - \beta)$, $T_2(\beta) = T_2(\pi - \beta)$, and $T_3(\beta) = - T_3(\pi - \beta)$;
    \item $T_1$ is always positive and has a maximum at $\beta = \pi/2$;
    \item $T_2$ is always negative and it is zero at $\beta = \pi/2$.
    \item $T_3$ has already been analyzed before.
\end{enumerate}
\paragraph{Case $\Gamma > 0$} Let us consider the interval $(\beta_1, \pi/2]$. Inside the interval, all the terms $T_i$, $i = 1, \dots, 3$ are growing monotonically. Since $\frac{\text{d}\Psi_3}{\text{d} \beta}(\beta_1) = 0$, then $\frac{\text{d}\Psi_3}{\text{d} \beta}$ is always positive inside the considered interval. Due to the symmetries of $T_1$, $T_2$, and $T_3$, the derivative of $\Psi$ in the interval $[\pi/2, \beta_2]$ decreases monotonically until it becomes zero at $\beta_2$. The function $\frac{\text{d}\Psi_3}{\text{d} \beta}$ is therefore always positive in $(\beta_1, \beta_2)$. Consequently, $\Psi$ has only one zero.
\paragraph{Case $\Gamma < 0$} In $[\beta_1, \pi/2]$, $\frac{\text{d}\Psi_3}{\text{d} \beta} > 0$ because $T_1 >0$, $T_3 > 0$, and $T_2$ increases monotonically. In $[\pi/2, \beta_2]$, $T_2$ and $T_3$ are negative but also monotonic. This means that $\frac{\text{d}\Psi_3}{\text{d} \beta}$ decreases monotonically until it becomes zero at $\beta_2$. The function $\frac{\text{d}\Psi_3}{\text{d} \beta}$ is therefore always positive in $(\beta_1, \beta_2)$. Consequently, $\Psi$ has only one zero.
\paragraph{Case $\Gamma = 0$}  In $[\beta_1, \pi/2]$, $\frac{\text{d}\Psi_3}{\text{d} \beta} > 0$ because $T_1 > 0$ and $T_2$ increases monotonically.  In $[\pi/2, \beta_2]$, $T_2$ is negative but also monotonic. This means that $\frac{\text{d}\Psi_3}{\text{d} \beta}$ decreases monotonically until it becomes zero at $\beta_2$. The function $\frac{\text{d}\Psi_3}{\text{d} \beta}$ is therefore always positive in $(\beta_1, \beta_2)$. Consequently, $\Psi$ has only one zero.
\subsubsection{Both Signs Positive}
Finally consider the case when both of the signs inside $\Psi$ are positive. The derivative becomes:
\begin{equation}
\begin{split}
        \frac{\text{d}\Psi_1}{\text{d} \beta} &= -2(\cos^2 \beta - \sin^2 \beta) \left(1 -2\sqrt{\frac{1 - 3 \cos^2 \beta}{1 + \Bar{e} \cos \Bar{\theta}}} \right)+ 12 \frac{\cos^2 \beta \sin^2 \beta}{\sqrt{1 - 3\cos^2 \beta}\sqrt{1 + \Bar{e} \cos \Bar{\theta}}} -6\cos \beta \sin \beta \Gamma = T_1 + T_2 + T_3
\end{split}
\end{equation}
First note that when $\beta \rightarrow \beta_0$ and $\beta \rightarrow \pi - \beta_0$, then $\Psi_1 \rightarrow +\infty$. Let us define two angles $\beta_1, \beta_2$ such that
\begin{enumerate}
    \item $ \frac{\text{d}\Psi_1}{\text{d} \beta} > 0$ in $(\beta_0, \beta_1)$ and $\frac{\text{d}\Psi_3}{\text{d} \beta} = 0$ at $\beta = \beta_1 < \pi/2$;
    \item $\frac{\text{d}\Psi_1}{\text{d} \beta} > 0$ in $(\beta_2, \pi - \beta_0)$ and $\frac{\text{d}\Psi_1}{\text{d} \beta} = 0$ at $\beta = \beta_2 > \pi/2$;
    \item $\frac{\text{d}\Psi_1}{\text{d} \beta} $ can change sign in $(\beta_1, \beta_2)$.
\end{enumerate}
The idea is to show that $\frac{\text{d}\Psi_1}{\text{d} \beta} < 0$ inside $(\beta_1, \beta_2)$. If this happens, then $\Psi$ has \textit{at most} three zeros and \textit{at least} one.  The following statements hold in the interval $I_\beta^1$:
\begin{enumerate}
    \item $T_1(\beta) = T_1(\pi - \beta)$, $T_2(\beta) = T_2(\pi - \beta)$, and $T_3(\beta) = - T_3(\pi - \beta)$;
    \item $T_1$ has a negative minimum at $\beta = \pi/2$;
    \item $T_2$ is always positive and it is zero at $\beta = \pi/2$.
    \item $T_3$ has already been analyzed before.
\end{enumerate}
Regardless of the sign of the term $T_3$, the angle $\beta_1 < \pi/2$ due to Properties 2), 3), and 4) of the above item list.
\paragraph{\textbf{Case} $\Gamma > 0$} Due to the monotonicity of the functions $T_1$ and $T_2$ in the interval $(\beta_1, \pi/2)$ and since $T_3 < 0$ inside the same interval, we have that $\frac{\text{d}\Psi_1}{\text{d} \beta} \leq 0$ in the interval $[\beta_1, \pi/2)$. In $\beta = \pi/2$, $T_1 < 0$, $T_2 = T_3 = 0$, therefore the derivative is negative. Since $T_1$, $T_2$, and $T_3$ are monotonic,  $\frac{\text{d}\Psi_1}{\text{d} \beta}$ is negative and increasing in the interval $[\pi/2, \beta_2)$. Consequently, $\Psi$ can have maximum $3$ zeros and minimum $1$, depending on the values of $T_1$, $T_2$, and $T_3$ at $\beta = \beta_1$ and $\beta = \beta_2$. \\
\paragraph{\textbf{Case} $\Gamma < 0$} Due to the monotonicity of the functions $T_1$, $T_2$, and $T_3$ in the interval $(\beta_1, \pi/2)$, we have that $\frac{\text{d}\Psi_1}{\text{d} \beta}$ is negative and increasing in the interval $[\beta_1, \pi/2)$. In $\beta = \pi/2$, $T_1 < 0$, $T_2 = T_3 = 0$, therefore the derivative is negative. Since $T_1$, $T_2$ are monotonic and $T_3 < 0$ in the interval $[\pi/2, \beta_2)$, then $\frac{\text{d}\Psi_1}{\text{d} \beta}$ is negative and increasing. Consequently, $\Psi$ can have maximum $3$ zeros and minimum $1$, depending on the values of $T_1$, $T_2$, and $T_3$ at $\beta = \beta_1$ and $\beta = \beta_2$. \\
\paragraph{\textbf{Case} $\Gamma = 0$} The same conclusion of the first two cases directly follows. \\
Therefore, the function $\Psi$ can have at most 5 zeros and at least 3 inside interval $I_\beta^1$. The same reasoning applies to the interval $I_\beta^2$, hence the total number of zeros of the function $\Psi$ is at most 10 and at least 6.
\begin{figure}[!ht]
    \centering
    \subfigure[][{Algebraic constraints with both signs positive.}]
    {\includegraphics[width=0.49 \textwidth]{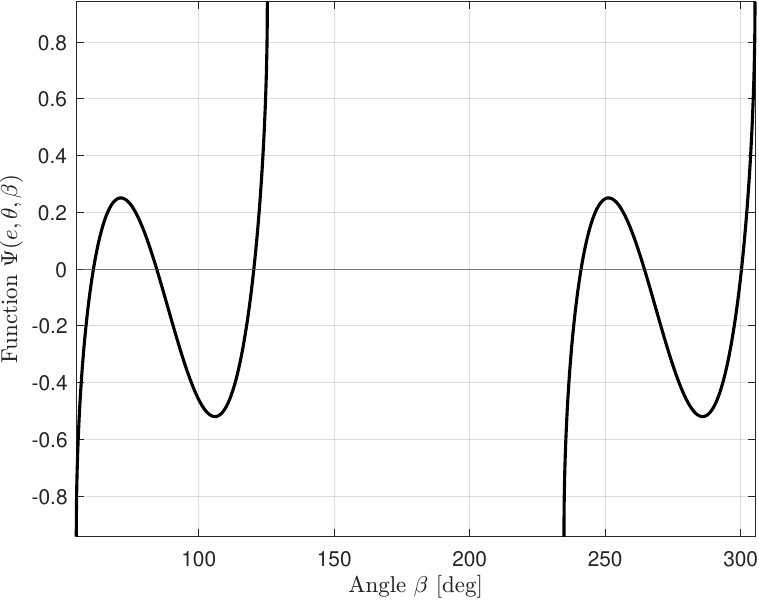}}
    \subfigure[][{Algebraic constraints with one sign positive and one sign negative.}]
    {\includegraphics[width=0.49 \textwidth]{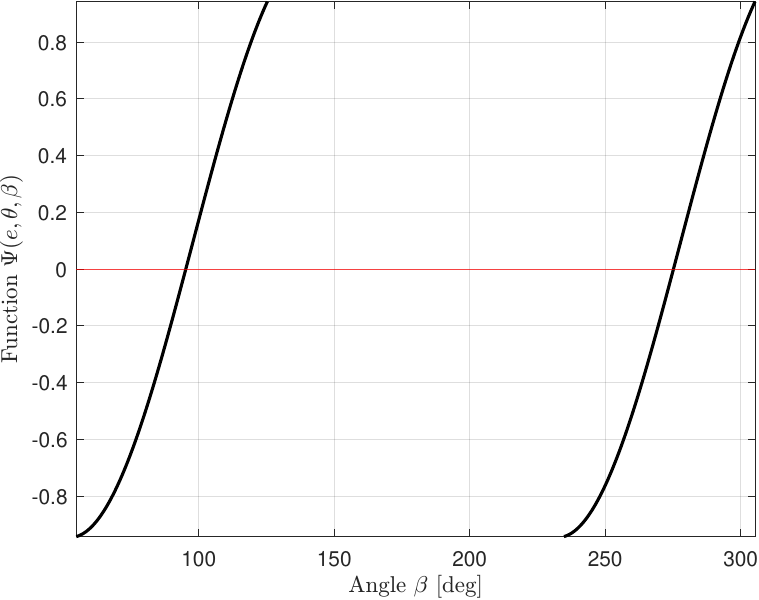}}\\
    \subfigure[][{Algebraic constraints with both signs negative.}]
    {\includegraphics[width=0.49 \textwidth]{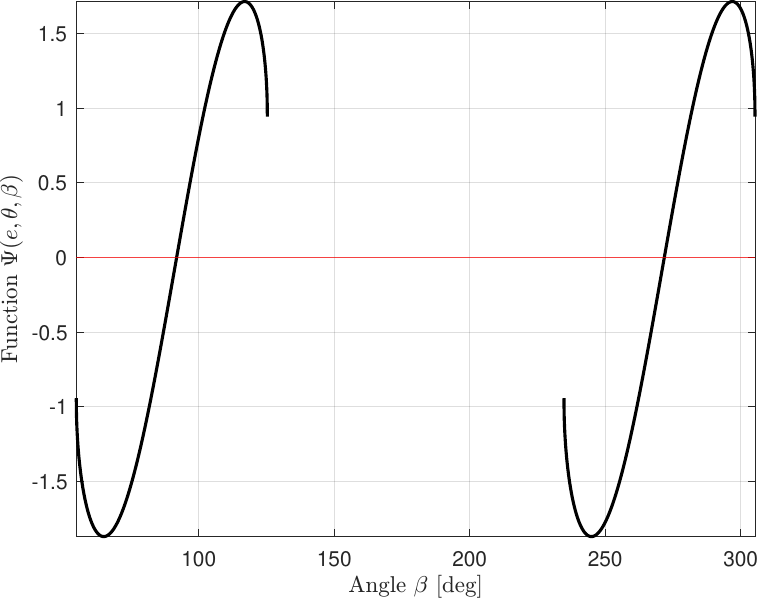}}
    \caption{Algebraic constraint function for a couple of values of $e$ and $\theta$.}
\label{fig:constr}
\end{figure}
\end{proof}
Corollary \ref{corollary2} provides insightful information about the rareness of singular arcs: for a given couple $(\bar{e}, \bar{\theta})$, there only is a discrete set of angles $\beta$ that satisfy the necessary condition. Therefore, the thruster must assume a specific direction, otherwise no singular arcs can happen. Not only this must be verified, but the tuple $(e, \theta, \beta)$ must move on a surface.
\subsection{Proof of Theorem 2.1}
\begin{proof}
Let us consider Eq.\ \cref{D1ddot}. Using the expression for $(\mathbf{p}_r \cdot \Dot{\mathbf{p}}_r)$ from Eq.\ \cref{prprdot}, it becomes
\begin{equation}
   D_3 = {\color{blue} \mp}2\mu\|\mathbf{p}_v\| \|\mathbf{p}_r\| \cos \beta \sin \beta - \|\mathbf{r}\|\|\mathbf{p}_r\|^2(\mathbf{r} \cdot \mathbf{v}) + 2\mu \|\mathbf{p}_v\|^2 \cos \beta \sin \beta \Dot{\beta} = 0
\end{equation}
By deriving the above equation, one obtains
\begin{equation}
\begin{split}
        \dot{D}_3 &= {\color{blue} \mp}2\mu \left[ \frac{(\mathbf{p}_v \cdot \Dot{\mathbf{p}}_v)}{\|\mathbf{p}_v\|}\|\mathbf{p}_r\| \cos \beta \sin \beta + \frac{(\mathbf{p}_r \cdot \Dot{\mathbf{p}}_r)}{\|\mathbf{p}_r\|}\|\mathbf{p}_v\| \cos \beta \sin \beta + \|\mathbf{p}_v\| \|\mathbf{p}_r\| (\cos^2 \beta - \sin^2 \beta) \Dot{\beta} \right] \\
        & - \frac{(\mathbf{r} \cdot \mathbf{v})^2}{\|\mathbf{r}\|}\|\mathbf{p}_r\|^2 - 2\|\mathbf{r}\| (\mathbf{p}_r \cdot \Dot{\mathbf{p}}_r)(\mathbf{r} \cdot \mathbf{v}) - \|\mathbf{r}\| \|\mathbf{p}_r\|^2 [\|\mathbf{v}\|^2 + (\mathbf{r} \cdot \Dot{\mathbf{v}})] \\
        & + 2\mu \left[ 2(\mathbf{p}_v \cdot \Dot{\mathbf{p}}_v) \cos \beta \sin \beta \Dot{\beta} + \|\mathbf{p}_v\|^2 (\cos^2 \beta - \sin^2 \beta) \Dot{\beta}^2 + \|\mathbf{p}_v\|^2 \cos \beta \sin \beta \Ddot{\beta} \right] = 0
\end{split}
\label{D3dot_costates}
\end{equation}
Next consider that
\begin{equation}
\begin{split}
        \mathbf{r} \cdot \Dot{\mathbf{v}} &= \mathbf{r} \cdot \left( -\mu \frac{\mathbf{r}}{\|\mathbf{r}\|^3} + \frac{T_{\text{max}}}{m}c \mathbf{n}\right) \\
        & -\frac{\mu}{\|\mathbf{r}\|} + \frac{T_{\text{max}}}{m} c \|\mathbf{r}\| \cos \beta
\end{split}
\label{rvdot}
\end{equation}
Using Lemma \ref{lemma2} and Eqs.\ \cref{prprdot,alphaDbetaD_mod,rvdot}, the equation $\dot{D}_3 = 0$ becomes
\begin{equation}
\begin{split}
        \dot{D}_3 &= -\frac{6\mu^2}{\|\mathbf{r}\|^3} \cos^2 \beta \sin^2 \beta {\color{blue} \mp} \frac{2\mu^2}{\|\mathbf{r}\|^3}\sqrt{(1 - 3\cos^2 \beta)(1 + e \cos \theta)} (\cos^2 \beta - \sin^2 \beta)  \left(-1 {\color{red} \pm} \sqrt{\frac{1 - 3 \cos^2 \beta}{1 + e \cos \theta}} \right) \\
        & - \frac{\mu}{\|\mathbf{r}\|^2} (1 - 3\cos^2 \beta) {\color{blue}\mp}  \frac{6\mu}{\|\mathbf{r}\|^2} \cos \beta \sin \beta \sqrt{\frac{\mu}{\|\mathbf{r}\|^3} (1 - 3\cos^2 \beta)} (\mathbf{r} \cdot \mathbf{v}) \\
        & - \frac{\mu}{\|\mathbf{r}\|^2} (1 - 3\cos^2 \beta)[\|\mathbf{v}\|^2 -\frac{\mu}{\|\mathbf{r}\|} + \frac{T_{\text{max}}}{m} c \|\mathbf{r}\| \cos \beta] \\
        & + 2\frac{\mu^2}{\|\mathbf{r}\|^3} (\cos^2 \beta - \sin^2 \beta) (1 + e \cos \theta) \left(-1 {\color{red} \pm} \sqrt{\frac{1 - 3 \cos^2 \beta}{1 + e \cos \theta}} \right)^2 + 2\mu \cos \beta \sin \beta \Ddot{\beta} = 0
\end{split}
\label{D3dot_states2}
\end{equation}
Note that in the above equation the control throttle factor $c$ appears. Now, let us analyze the expression of the term $\|\mathbf{v}\|^2$. According to the classical orbital mechanics, the component of the velocity along the radius is expressed in Eq.\ \cref{vr}, whereas the component of the velocity along the direction perpendicular to the radius is \cite{curtis2013orbital}
\begin{equation}
    v_{\perp} = \frac{\mu}{\|\mathbf{h}\|} (1 + e \cos \theta) = \frac{\mu}{\sqrt{\mu \|\mathbf{r}\| (1 + e \cos \theta)}} (1 + e \cos \theta) = \sqrt{\frac{\mu (1 + e \cos \theta)}{\|\mathbf{r}\|}}
\end{equation}
From the above equations we have that
\begin{equation}
    \|\mathbf{v}\|^2 = \frac{\mu}{\|\mathbf{r}\| } \frac{e^2 \sin^2 \theta}{1 + e \cos \theta} + \frac{\mu (1 + e \cos \theta)}{\|\mathbf{r}\|} = \frac{\mu}{\|\mathbf{r}\|} \left[ \frac{(1 + e \cos \theta)^2 + e^2 \sin^2 \theta}{(1 + e \cos \theta)} \right]
    \label{v2}
\end{equation}
By substituting Eqs.\ \cref{rv} and \cref{v2} into Eq.\ \cref{D3dot_states2}, we get
\begin{equation}
\begin{split}
        \dot{D}_3 &= -\frac{6\mu}{\|\mathbf{r}\|^2} \cos^2 \beta \sin^2 \beta {\color{blue} \mp} \frac{2\mu}{\|\mathbf{r}\|^2}\sqrt{(1 - 3\cos^2 \beta)(1 + e \cos \theta)} (\cos^2 \beta - \sin^2 \beta) \left(-1 {\color{red} \pm} \sqrt{\frac{1 - 3 \cos^2 \beta}{1 + e \cos \theta}} \right) \\
        & - \frac{\mu}{\|\mathbf{r}\|^2}\frac{e^2 \sin^2 \theta}{(1 + e \cos \theta)}(1 - 3\cos^2 \beta) {\color{blue}\mp}  \frac{6\mu}{\|\mathbf{r}\|^2} \cos \beta \sin \beta \sqrt{\frac{1 - 3\cos^2 \beta}{1 + e \cos \theta}}(e \sin \theta) \\
        & - \frac{\mu}{\|\mathbf{r}\|^2} (1 - 3\cos^2 \beta)\left[\frac{(1 + e \cos \theta)^2 + e^2 \sin^2 \theta}{(1 + e \cos \theta)} -1 \right] - (1 - 3\cos^2 \beta)\frac{T_{\text{max}}}{m} c \cos \beta \\
        & + 2\frac{\mu}{\|\mathbf{r}\|^2} (\cos^2 \beta - \sin^2 \beta) (1 + e \cos \theta) \left(-1 {\color{red} \pm} \sqrt{\frac{1 - 3 \cos^2 \beta}{1 + e \cos \theta}} \right)^2 + 2\cos \beta \sin \beta \Ddot{\beta} = 0
\end{split}
\label{D3dot_states3}
\end{equation}
Therefore, we have expressed the term $\dot{D}_3$ only as a function of $\|\mathbf{r}\|$, $e$, $\beta$, $\theta$, and $\Ddot{\beta}$. In order to cancel out the dependency on $\Ddot{\beta}$, we proceed as follows. Let us consider Eq.\ \cref{D2} and take its derivative
\begin{equation}
    -3\|\mathbf{r}\| (\mathbf{r} \cdot \mathbf{v})(\Dot{\alpha} + \Dot{\beta})^2 - 2(\Dot{\alpha} + \Dot{\beta})(\Ddot{\alpha} + \Ddot{\beta})\|\mathbf{r}\|^3 +  6\mu \cos \beta \sin \beta \Dot{\beta} = 0
\end{equation}
By substituting the expressions for $\Dot{\beta}$, $(\Dot{\alpha} + \Dot{\beta})$, and $(\mathbf{r} \cdot \mathbf{v})$:
\begin{equation}
\begin{split}
        & -3\sqrt{\frac{\mu^3}{\|\mathbf{r}\|^3}} \frac{e \sin \theta}{\sqrt{1 + e \cos \theta}} (1 - 3\cos^2 \beta) {\color{red} \mp} 2\sqrt{\mu\|\mathbf{r}\|^3 (1 - 3\cos^2 \beta)}(\Ddot{\alpha} + \Ddot{\beta}) \\
        & + 6\mu \cos \beta \sin \beta \sqrt{\frac{\mu}{\|\mathbf{r}\|^3}}\sqrt{1 + e \cos \theta} \left(-1 {\color{red} \pm} \sqrt{\frac{1 - 3 \cos^2 \beta}{1 + e \cos \theta}} \right) = 0
\end{split}
\label{beta2dotfirst}
\end{equation}
From Eq.\ \cref{alphadot},
\begin{equation}
\begin{split}
        \Ddot{\alpha} &= \frac{\text{d}}{\text{d}t} \left( \frac{\|\mathbf{h}\|}{\|\mathbf{r}\|^2} \right) \\ & = \frac{\text{d}}{\text{d}t} \left( \frac{\sqrt{\mu \|\mathbf{r}\| (1 + e \cos \theta)}}{\|\mathbf{r}\|^2} \right) \quad \leftarrow \text{using Eq.\ \cref{h}} \\
        &= \sqrt{\mu} \frac{\text{d}}{\text{d}t} \left( \sqrt{\frac{(1 + e \cos \theta)}{\|\mathbf{r}\|^3}} \right) \\
        & = \sqrt{\mu} \left( -\frac{3}{2} \|\mathbf{r}\|^{-7/2}(\mathbf{r} \cdot \mathbf{v}) \sqrt{1 + e \cos \theta} +  \frac{\Dot{e}\cos \theta - e \Dot{\theta}\sin \theta}{2\sqrt{1 + e \cos \theta}} \|\mathbf{r}\|^{-3/2}\right) \\
        & =  -\frac{3}{2}\frac{\mu}{\|\mathbf{r}\|^{3}} e \sin \theta +  \sqrt{\frac{\mu}{\|\mathbf{r}\|^3}}\frac{\Dot{e}\cos \theta - e \Dot{\theta}\sin \theta}{2\sqrt{1 + e \cos \theta}} \quad \leftarrow \text{using Eq.\ \cref{rv}}\\ 
        & =  -\frac{3}{2}\frac{\mu}{\|\mathbf{r}\|^{3}} e \sin \theta + K(\|\mathbf{r}\|, e, \theta, \Dot{e}, \Dot{\theta})
\end{split}
\label{alpha2dot}
\end{equation}
From the classical orbital mechanics \cite{curtis2013orbital}, we have that the evolution in time of the eccentricity and the true anomaly is described by
\begin{equation}
    \begin{split}
        \Dot{e} & = \sqrt{\frac{\|\mathbf{r}\| (1 + e \cos \theta)}{\mu}}\sin \theta \frac{T_r}{m} + \sqrt{\frac{\|\mathbf{r}\|}{\mu(1 + e \cos \theta)}} \left[(2 + e \cos \theta) \cos \theta + e \right] \frac{T_s}{m}\\
        \Dot{\theta} & =  \sqrt{\frac{\mu}{\|\mathbf{r}\|^3} (1 + e \cos \theta)} + \sqrt{\frac{ \|\mathbf{r}\|}{e \mu (1 + e \cos \theta)}} \left[ (1 + e \cos \theta) \cos \theta \frac{T_r}{m} - \left( 2 +  e \cos \theta\right) \sin \theta \frac{T_s}{m}\right] 
    \end{split}
    \label{elemdot}
\end{equation}
In the above equations, $T_r$ and $T_s$ are the components of the thrust along the radius and its perpendicular direction, respectively. That is,
\begin{equation}
    \begin{split}
        & T_r = cT_{\text{max}} \cos{\beta} \\
        & T_s = cT_{\text{max}} \sin{\beta}
    \end{split}
    \label{TrTs}
\end{equation}
Many works in the literature use the Gauss variational equations or some modified version to represent the equations of motion of a spacecraft equipped with low-thrust engines \cite{hudson2009reduction,junkins2019exploration,gurfil2007nonlinear}. Now, let us analyze the term $K$ in Eq.\ \cref{alpha2dot}. Consider first that $K = K_1 + K_2$, where 
\begin{equation}
    \begin{split}
        & K_1 =  \sqrt{\frac{\mu}{\|\mathbf{r}\|^3}}\frac{\Dot{e}\cos \theta}{2\sqrt{1 + e \cos \theta}} \\
        & K_2 =  -\sqrt{\frac{\mu}{\|\mathbf{r}\|^3}}\frac{e \Dot{\theta}\sin \theta}{2\sqrt{1 + e \cos \theta}}
    \end{split}
    \label{K1K2}
\end{equation}
First consider $K_1$. If we substitute the expression for $\Dot{e}$, we get
\begin{equation}
    \begin{split}
        K_1 &=  \sqrt{\frac{\mu}{\|\mathbf{r}\|^3}}\frac{\Dot{e}\cos \theta}{2\sqrt{1 + e \cos \theta}} \\
        & = \sqrt{\frac{\mu}{\|\mathbf{r}\|^3}}\frac{\cos \theta}{2\sqrt{1 + e \cos \theta}}\left\{ \sqrt{\frac{\|\mathbf{r}\| (1 + e \cos \theta)}{\mu}}\sin \theta \frac{T_r}{m} + \sqrt{\frac{\|\mathbf{r}\|}{\mu(1 + e \cos \theta)}} \left[(2 + e \cos \theta) \cos \theta + e \right] \frac{T_s}{m}
        \right\} \\
        & = \frac{\cos \theta \sin \theta}{2\|\mathbf{r}\|}\frac{T_r}{m} + \frac{\cos^2 \theta (2 + e \cos \theta)}{2\|\mathbf{r}\|(1 + e \cos \theta)}\frac{T_s}{m} + \frac{e \cos \theta}{2\|\mathbf{r}\|(1 + e \cos \theta)}\frac{T_s}{m}  \\
        & = \frac{\cos \theta \sin \theta}{2\|\mathbf{r}\|}c\frac{T_{\text{max}}}{m}\cos{\beta} + \frac{\cos^2 \theta (2 + e \cos \theta)}{2\|\mathbf{r}\|(1 + e \cos \theta)}c\frac{T_{\text{max}} }{m}\sin{\beta} + \frac{e \cos \theta}{2\|\mathbf{r}\|(1 + e \cos \theta)}c\frac{T_{\text{max}}}{m}\sin{\beta} \quad \leftarrow \text{from Eq.\ \cref{TrTs}} \\
        & = c\frac{T_{\text{max}}}{m} \frac{1}{2\|\mathbf{r}\|}\left[ \cos \theta \sin \theta \cos{\beta} + \frac{\cos^2 \theta (2 + e \cos \theta)}{1 + e \cos \theta}\sin{\beta} + \frac{e \cos \theta}{1 + e \cos \theta}\sin{\beta}\right] \\
        & = c\frac{T_{\text{max}}}{m}H_1(\|\mathbf{r}\|,e, \theta, \beta)
    \end{split}
    \label{K1exp}
\end{equation}
We can proceed in the same way for $K_2$:
\begin{equation}
    \begin{split}
        K_2 &= -\sqrt{\frac{\mu}{\|\mathbf{r}\|^3}}\frac{e \Dot{\theta}\sin \theta}{2\sqrt{1 + e \cos \theta}} \\
        & =  -\sqrt{\frac{\mu}{\|\mathbf{r}\|^3}}\frac{e \sin \theta}{2\sqrt{1 + e \cos \theta}} \sqrt{\frac{\mu}{\|\mathbf{r}\|^3} (1 + e \cos \theta)} \\
        & -\sqrt{\frac{\mu}{\|\mathbf{r}\|^3}}\frac{e \sin \theta}{2\sqrt{1 + e \cos \theta}} \sqrt{\frac{ \|\mathbf{r}\|}{e \mu (1 + e \cos \theta)}}\left[(1 + e \cos \theta) \cos \theta \frac{T_r}{m} - \left( 2 +  e \cos \theta\right) \sin \theta \frac{T_s}{m}\right] \\
        & = -\frac{\mu}{\|\mathbf{r}\|^3}\frac{e \sin \theta}{2} - \frac{\sin \theta \cos \theta}{2\|\mathbf{r}\|} \frac{T_r}{m} + \frac{\sin^2 \theta (2 + e \cos \theta)}{2\|\mathbf{r}\|(1 + e \cos \theta)}\frac{T_s}{m} \quad \leftarrow \text{from Eq.\ \cref{TrTs}} \\
        & = -\frac{\mu}{\|\mathbf{r}\|^3}\frac{e \sin \theta}{2} + c\frac{T_{\text{max}}}{m}\frac{1}{2\|\mathbf{r}\|} \left[- \cos \theta \sin \theta \cos \beta + \frac{\sin^2 \theta (2 + e \cos \theta)}{1 + e \cos \theta} \sin \beta \right] \\
        & = -\frac{\mu}{\|\mathbf{r}\|^3}\frac{e \sin \theta}{2} + c\frac{T_{\text{max}}}{m} H_2(\|\mathbf{r}\|, e, \theta, \beta)
    \end{split}
    \label{K2exp}
\end{equation}
Therefore, using Eqs.\ \cref{K1K2,K1exp,K2exp}, the expression of $\Ddot{\alpha}$ in Eq.\ \cref{alpha2dot} becomes:
\begin{equation}
\begin{split}
        \Ddot{\alpha} &= -\frac{3}{2}\frac{\mu}{\|\mathbf{r}\|^{3}} e \sin \theta + c\frac{T_{\text{max}}}{m}H_1(\|\mathbf{r}\|,e, \theta, \beta)-\frac{\mu}{\|\mathbf{r}\|^3}\frac{e \sin \theta}{2} + c\frac{T_{\text{max}}}{m} H_2(\|\mathbf{r}\|, e, \theta, \beta) \\
        &= -2 \frac{\mu}{\|\mathbf{r}\|^{3}} e \sin \theta + c\frac{T_{\text{max}}}{m} \left[ H_1(\|\mathbf{r}\|,e, \theta, \beta) +  H_2(\|\mathbf{r}\|, e, \theta, \beta) \right]
\end{split}
\label{alpha2dot2}
\end{equation}
Let us analyze the term $H_1(\|\mathbf{r}\|,e, \theta, \beta) +  H_2(\|\mathbf{r}\|, e, \theta, \beta)$:
\begin{equation}
    \begin{split}
        H_1(\|\mathbf{r}\|,e, \theta, \beta) +  H_2(\|\mathbf{r}\|, e, \theta, \beta) & =  \frac{1}{2\|\mathbf{r}\|} \left[\cos \theta \sin \theta \cos{\beta} + \frac{\cos^2 \theta (2 + e \cos \theta)}{1 + e \cos \theta}\sin{\beta} + \frac{e \cos \theta}{1 + e \cos \theta}\sin{\beta}\right] \\
        & + \frac{1}{2\|\mathbf{r}\|} \left[- \cos \theta \sin \theta \cos \beta + \frac{\sin^2 \theta (2 + e \cos \theta)}{1 + e \cos \theta} \sin \beta \right] \\
        & =  \frac{1}{2\|\mathbf{r}\|}\frac{\cos^2 \theta (2 + e \cos \theta) + e \cos \theta + \sin^2 \theta (2 + e \cos \theta)}{1 + e \cos \theta} \sin \beta \\
        & =  \frac{1}{2\|\mathbf{r}\|}\frac{\cos^2 \theta (2 + e \cos \theta) + e \cos \theta + (1 - \cos^2 \theta) (2 + e \cos \theta)}{1 + e \cos \theta} \sin \beta \\
        & =  \frac{1}{2\|\mathbf{r}\|}2 \sin \beta = \frac{\sin \beta}{\|\mathbf{r}\|}
    \end{split}
\end{equation}
Eq.\ \cref{alpha2dot2} reduces therefore to
\begin{equation}
    \Ddot{\alpha} = -2 \frac{\mu}{\|\mathbf{r}\|^{3}} e \sin \theta + c\frac{T_{\text{max}}}{m} \frac{\sin \beta}{\|\mathbf{r}\|}
    \label{alpha2dot3}
\end{equation}
Using Eq.\ \cref{alpha2dot3}, from Eq.\ \cref{beta2dotfirst} we can find
\begin{equation}
\begin{split}
         \Ddot{\beta} &= {\color{red} \mp}\frac{-6 \cos \beta \sin \beta \sqrt{\frac{\mu^3}{\|\mathbf{r}\|^3}(1 + e \cos \theta)} \left(-1 {\color{red} \pm}  \sqrt{\frac{1 - 3 \cos^2 \beta}{1 + e \cos \theta}} \right) + 3\sqrt{\frac{\mu^3}{ \|\mathbf{r}\|^3}} \frac{e \sin \theta}{\sqrt{1 + e \cos \theta}} (1 - 3\cos^2 \beta)}{ 2\sqrt{\mu \|\mathbf{r}\|^3 (1 - 3\cos^2 \beta)}} \\
         & +2 \frac{\mu}{\|\mathbf{r}\|^{3}} e \sin \theta - c\frac{T_{\text{max}}}{m} \frac{\sin \beta}{\|\mathbf{r}\|}\\
         & = D(\|\mathbf{r}\|, e, \theta, \beta) - c\frac{T_{\text{max}}}{m} \frac{\sin \beta}{\|\mathbf{r}\|}
\end{split}
\end{equation}
Finally, Eq.\ \cref{D3dot_states3} can be rewritten by substituting the expression of $\Ddot{\beta}$ as
\begin{equation}
\begin{split}
        \dot{D}_3 &= -\frac{6\mu}{\|\mathbf{r}\|^3} \cos^2 \beta \sin^2 \beta {\color{blue} \mp} \frac{2\mu}{\|\mathbf{r}\|^3}\sqrt{1 - 3\cos^2 \beta} (\cos^2 \beta - \sin^2 \beta) \sqrt{1 + e \cos \theta} \left(-1 {\color{red} \pm} \sqrt{\frac{1 - 3 \cos^2 \beta}{1 + e \cos \theta}} \right) \\
        & - \frac{\mu}{\|\mathbf{r}\|^3}\frac{e^2 \sin^2 \theta}{(1 + e \cos \theta)}(1 - 3\cos^2 \beta) {\color{blue}\mp}  \frac{6\mu}{\|\mathbf{r}\|^3} \cos \beta \sin \beta \sqrt{1 - 3\cos^2 \beta}\frac{e \sin \theta}{\sqrt{1 + e \cos \theta}} \\
        & - \frac{\mu}{\|\mathbf{r}\|^3} (1 - 3\cos^2 \beta)\left[\frac{(1 + e \cos \theta)^2 + e^2 \sin^2 \theta}{(1 + e \cos \theta)} -1 \right] - \frac{1}{\|\mathbf{r}\|} (1 - 3\cos^2 \beta)\frac{T_{\text{max}}}{m} c \cos \beta \\
        & + 2\frac{\mu}{\|\mathbf{r}\|^3} (\cos^2 \beta - \sin^2 \beta) (1 + e \cos \theta) \left(-1 {\color{red} \pm} \sqrt{\frac{1 - 3 \cos^2 \beta}{1 + e \cos \theta}} \right)^2 + 2\cos \beta \sin \beta \left( D(\|\mathbf{r}\|, e, \theta, \beta) - c\frac{T_{\text{max}}}{m} \frac{\sin \beta}{\|\mathbf{r}\|}\right) = 0
\end{split}
\label{D3dot_states4}
\end{equation}
The above equation only depends on $\|\mathbf{r}\|, e, m, \theta, \beta$, and $c$. Therefore, we can write
\begin{equation}
    c_s = \frac{B(\|\mathbf{r}\|, e, m, \theta, \beta)}{A(\beta)}
    \label{singcon}
\end{equation}
where
\begin{equation}
\begin{split}
        A &= (1 - 3\cos^2 \beta) \cos \beta + 2 \cos \beta \sin^2 \beta
\end{split}
\end{equation}
and
\begin{equation}
\begin{split}
        B &= \|\mathbf{r}\|\frac{m} {T_{\text{max}}} \bigg \{-\frac{6\mu}{\|\mathbf{r}\|^3} \cos^2 \beta \sin^2 \beta {\color{blue} \mp} \frac{2\mu}{\|\mathbf{r}\|^3}\sqrt{1 - 3\cos^2 \beta} (\cos^2 \beta - \sin^2 \beta) \sqrt{1 + e \cos \theta} \left(-1 {\color{red} \pm} \sqrt{\frac{1 - 3 \cos^2 \beta}{1 + e \cos \theta}} \right) \\
        & - \frac{\mu}{\|\mathbf{r}\|^3}\frac{e^2 \sin^2 \theta}{(1 + e \cos \theta)}(1 - 3\cos^2 \beta) {\color{blue}\mp}  \frac{6\mu}{\|\mathbf{r}\|^3} \cos \beta \sin \beta \sqrt{1 - 3\cos^2 \beta}\frac{e \sin \theta}{\sqrt{1 + e \cos \theta}} \\
        & - \frac{\mu}{\|\mathbf{r}\|^3} (1 - 3\cos^2 \beta)\left[\frac{(1 + e \cos \theta)^2 + e^2 \sin^2 \theta}{(1 + e \cos \theta)} -1 \right] \\
        & + 2\frac{\mu}{\|\mathbf{r}\|^3} (\cos^2 \beta - \sin^2 \beta) (1 + e \cos \theta) \left(-1 {\color{red} \pm} \sqrt{\frac{1 - 3 \cos^2 \beta}{1 + e \cos \theta}} \right)^2 + 2\cos \beta \sin \beta  D(\|\mathbf{r}\|, e, \theta, \beta) \bigg \}
\end{split}
\label{Bterm}
\end{equation}
\end{proof}
\subsection{Proof of Theorem 2.2}
\begin{proof}
For $A(\beta)$ to be zero it must be
\begin{equation}
\begin{split}
        (1 - 3\cos^2 \beta) \cos \beta + 2 \cos \beta \sin^2 \beta &= 0 \\
        \cos \beta (1 - 3 \cos^2 \beta + 2\sin^2 \beta) & = 0 \\
        \cos \beta (1 - 3 +3\sin^2 \beta + 2\sin^2 \beta) & = 0 \\
        \cos \beta (-2 + 5 \sin^2 \beta) & = 0
\end{split}
\end{equation}
Which means
\begin{equation}
    \cos \beta = 0 \vee \sin \beta = \pm \sqrt{\frac{2}{5}}
\end{equation}
However, the second case is not compatible with the condition in Corollary \ref{corollary1}. Therefore, the only case for which $A = 0$ happens when $\beta = \frac{\pi}{2} + k\pi$, $k \in \mathbb{Z}$. According to Eq.\ \cref{algebraic}, this corresponds to the following cases: $e = 0$ or $\sin \theta = 0$, hence the first point of Theorem 2.1. To demonstrate the second point, consider that if $\beta = \frac{\pi}{2} + k\pi$, $k \in \mathbb{Z}$, then $T_r = 0$ $T_s = \pm cT_{\text{max}}$. Consequently, Eqs.\ \cref{elemdot} become
\begin{equation}
    \begin{split}
        \Dot{e} & =  \sqrt{\frac{\|\mathbf{r}\|}{\mu(1 + e \cos \theta)}} \left[(2 + e \cos \theta) \cos \theta + e \right] \frac{T_s}{m}\\
        \Dot{\theta} & =  \sqrt{\frac{\mu}{\|\mathbf{r}\|^3} (1 + e \cos \theta)} + \sqrt{\frac{ \|\mathbf{r}\|}{e \mu (1 + e \cos \theta)}} \left[ - \left( 2 +  e \cos \theta\right) \sin \theta \frac{T_s}{m}\right] 
    \end{split}
    \label{elemdotmod}
\end{equation}
Now, if the case $e = 0$ is considered, 
\begin{equation}
    \begin{split}
        \Dot{e} & =  \pm2\sqrt{\frac{\|\mathbf{r}\|}{\mu}} \cos \theta  \frac{cT_{\text{max}}}{m}
    \end{split}
\end{equation}
which can only be zero if either $c = 0$ (and thus in a non-singular arc case) or when $\cos \theta = 0$. Consider now the case in which $\sin \theta = 0$.  From Eq.\ \cref{elemdotmod}, 
\begin{equation}
    \begin{split}
        \Dot{\theta} & =  \sqrt{\frac{\mu}{\|\mathbf{r}\|^3} (1 + e \cos \theta)} > 0
    \end{split}
    \label{elemdotmod}
\end{equation}

\end{proof}
\subsection{Summary of the Theoretical Results}
The theoretical results obtained in Section \ref{sec:strategy} can be summarized as follows.
\begin{enumerate}
    \item We have found the algebraic necessary condition expressed in Eq.\ \cref{algebraici} to have singular arcs that depends on three physical variables only, namely the eccentricity of the spacecraft, its true anomaly, and the angle $\beta$. Previous works in literature have only found necessary conditions that depend on the state \textit{and} the costates \cite{park2013necessary}. This represents a major improvement because an easier evaluation of the necessary condition can be performed. Moreover, the evaluation of the algebraic condition provides a physical grasp on the problem.
    \item In Corollary \ref{corollary2}, we have shown that the solutions of the algebraic necessary conditions for each fixed couple of eccentricity and true anomaly are, in number, between six and ten. This means that the thruster must assume specific directions, hence suggesting the reasons of the rareness of singular arcs.
    \item Leveraging the Gauss variational equations, we have expressed the singular throttle factor as the ratio of two algebraic expressions that solely depend on a limited set of physical variables. The expression of the singular throttle factor is provided in Theorem \ref{theorem2}. As per the previous point, past works in literature were only able to express it using the costates.
\end{enumerate}
So far, reference works in the field of low-thrust trajectory optimization have assumed singular arcs are negligible when computing interplanetary trajectories, although a complete theoretical framework to justify this assumption was still missing.
\section{Numerical Simulations}
\label{sec:genass}
\subsection{Algebraic Necessary Conditions}
This section shows the solutions of the algebraic condition in Eq.\ \cref{algebraic}. Taking into account the fact that two signs can vary, the equation actually represents four conditions. However, due to the symmetry of the condition itself, they reduce to three. Since the equation is not solvable in closed form, the solutions are obtained numerically with the MATLAB\textregistered \, function \textit{fzero} and the default solver, which requires an initial guess. We relied on Corollary \ref{corollary2} to provide the correct number of initial guesses for each couple $(e, \theta)$ such that all the solutions of the equation were found. Figure \ref{fig:algeb1} shows the angle $\beta$ that respects the condition for the case $\sin \beta > 0$. 
\begin{figure}[!ht]
    \centering
    {\includegraphics[width=\textwidth]{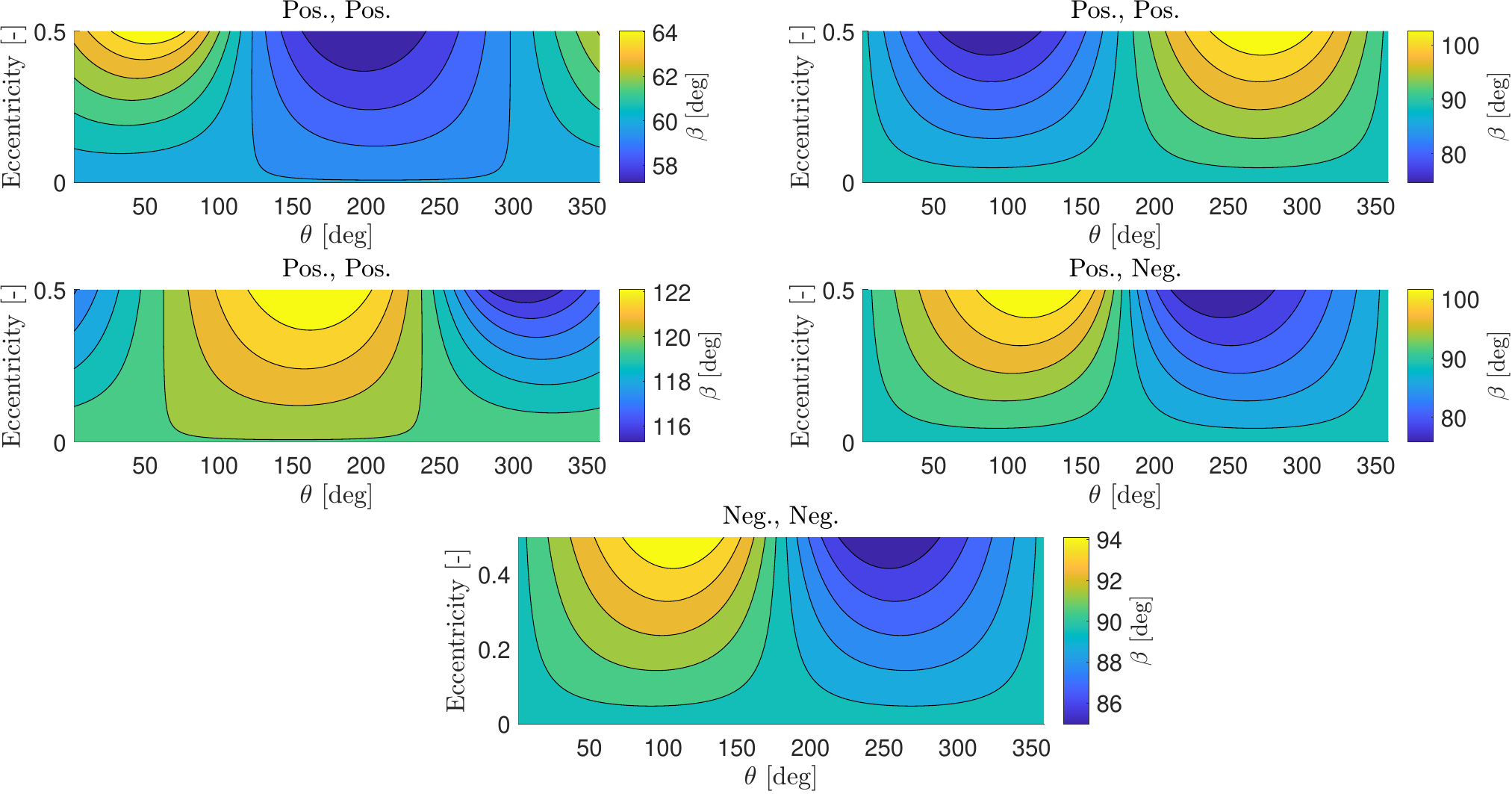}}
    \caption{Solutions of the equation $\Psi(e, \theta, \beta) = 0$ when $\sin \beta > 0$.}
\label{fig:algeb1}
\end{figure}
Figure \ref{fig:algeb2} shows the same as the previous figures, respectively, for the case $\sin \beta < 0$. Note that, due to the symmetry of the necessary conditions, the values $\beta_2$ correspondent to the cases $\sin \beta < 0$ are such that $\beta_2 = \beta_1 + \pi$, where $\beta_1$ are the angles that correspond to the case $\sin \beta > 0$. In the plots, the eccentricity varies between the values $0$ and $0.5$, as our simulations on several low-thrust trajectories show that it rarely overcomes the value of $0.5$. Note that for the selected intervals of $e$ and $\theta$, the algebraic necessary condition has always ten zeros. If cases with $e > 0.5$ are considered, it may happen that it has less, as Fig.\ \ref{examplealg} shows.
\begin{figure}[!ht]
    \centering
    {\includegraphics[width=\textwidth]{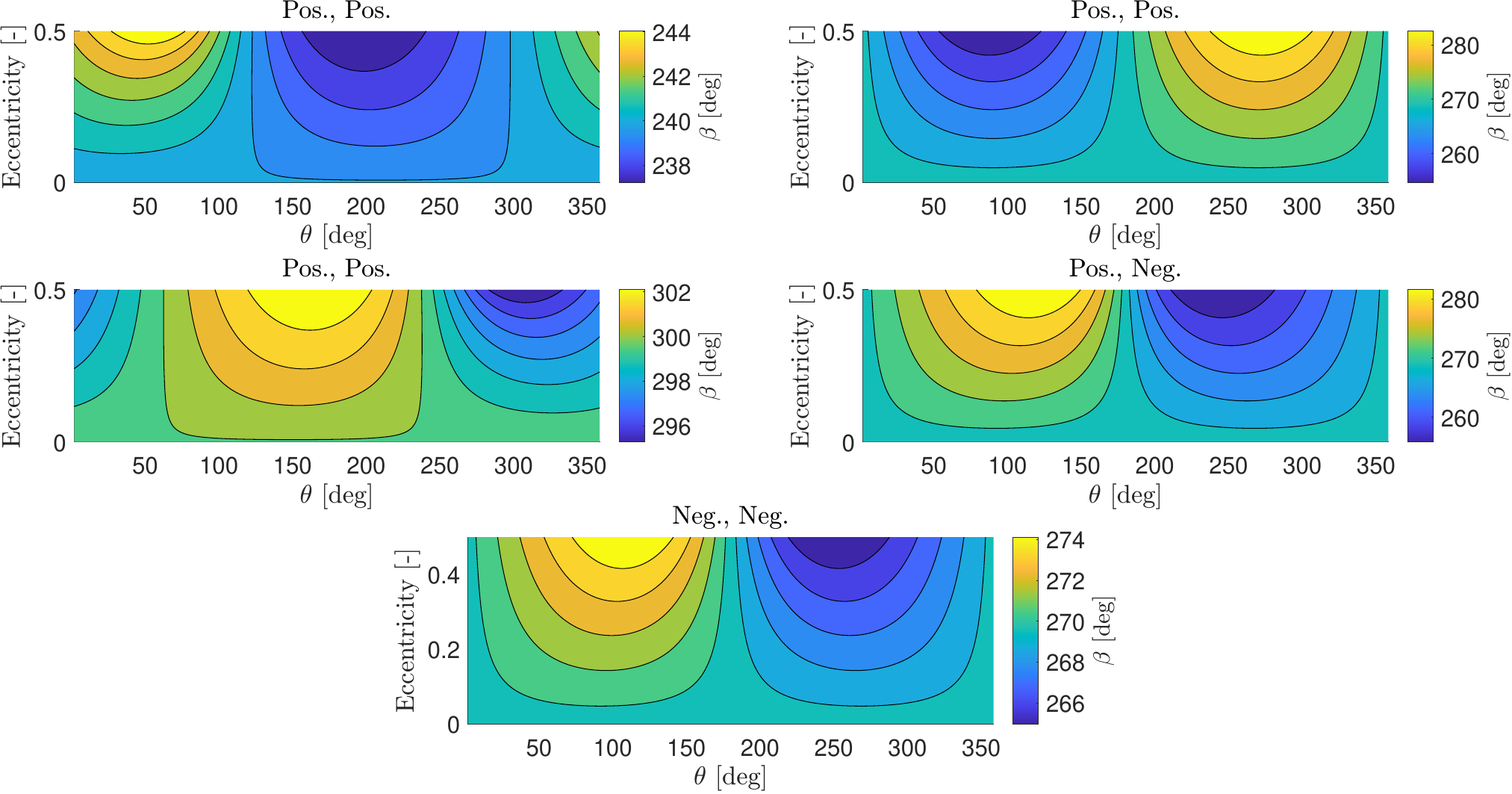}}
    \caption{Solutions of the equation $\Psi(e, \theta, \beta) = 0$ when $\sin \beta < 0$.}
\label{fig:algeb2}
\end{figure}
\begin{figure*}[h]
	\centering
\includegraphics[width=\textwidth]{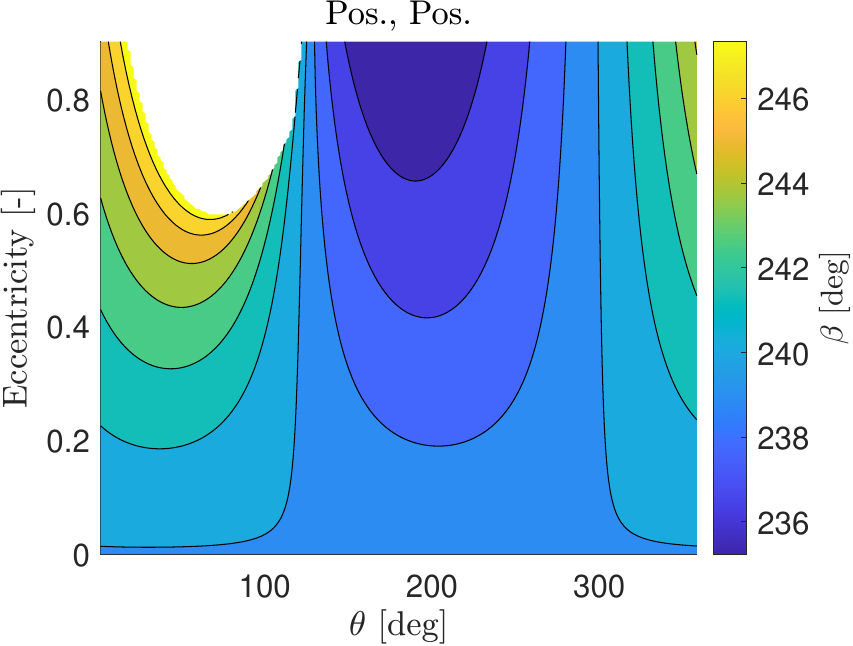}
	\caption{Example cases for which the algebraic necessary condition has less than ten zeros.}
	\label{examplealg}
\end{figure*}
\subsection{Value of the Singular Control}
In this section, we use Eq.\ \cref{singcon} to find the value of the singular control for a grid of parameters $\|\mathbf{r}\|, e, m, \theta$. We consider:
\begin{enumerate}
    \item a uniform grid of radii from $\|\mathbf{r}_{\text{min}}\| = 0.1$ to $\|\mathbf{r}_{\text{max}}\| = 15$;
    \item eccentricity values ranging from $e_{\text{min}} = 10^{-3}$ to $e_{\text{max}} = 0.9$; the minimum eccentricity value is not zero to avoid the singularity of the term $A(\beta)$ presented in Theorem 2.2.
    \item a uniform grid of true anomaly values from $\theta_{\text{min}} = 10^{-2}$ to $\theta_{\text{max}} = 1.99\pi$; the minimum and the maximum true anomaly values are not zero and $2\pi$, respectively, to avoid the singularity of the term $A(\beta)$ presented in Theorem 2.2.
    \item a mass of value $1$;
\end{enumerate}
Note that all the above values are dimensionless. The value of $\beta$ to be used comes from the algebraic necessary condition given a couple of parameters $(e, \theta)$. For all combinations of parameters, we evaluate the singular control $c_s$ in Eq.\ \cref{singcon}. If $c_s > 1 - \varepsilon$ or $c_s < \varepsilon$, with $\varepsilon = 10^{-3}$, this means that the controls are in fact non-singular. Figure \ref{fig:percentage} shows the overall probability of encountering singular controls as a function of the radius $\| \mathbf{r}\|$ and of eccentricity. The current low-thrust missions are usually sent to Mars, Venus, or the Main Asteroid Belt. Although further destinations are possible, most of low-thrust missions use solar arrays to power engines, and the available power decreases with the inverse of the square of the distance from the Sun. Our simulations show that typical eccentricity values for low-thrust interplanetary missions are in the range of $0.1-0.5$. In these ranges, the overall percentage of possible singular controls is below $10\%$. Note that this does not mean that singular arcs happen with this frequency, but that \textit{in case} the necessary algebraic condition is verified, there is a probability of at most $10\%$ to have singular values. Table \ref{tab:CBdata} shows the distance to the Sun and the inclination to the ecliptic of the major Solar System celestial bodies. Although we developed our work using the assumption of planar dynamics, the inclination to the ecliptic of the majorities of the celestial bodies is  low, therefore it is likely that the results could still be applied to real missions. 
Finally, when the radius reaches the value of approximately $15$, Fig.\ \ref{fig:percentage} shows that if the necessary condition to have singular arcs is satisfied, then the control is indeed singular in $100\%$ of the cases because, for those values, $0 \leq c_s \leq 1$.
\begin{table*}
  \centering
  \setlength\abovecaptionskip{6pt}
  \captionof{table}{Solar System celestial bodies data.}
  \begin{tabular}{lcc} \toprule \toprule 
		Celestial body & Semi-major axis [AU] & Inclination to the ecliptic [deg] \\ \midrule
	Mercury & 0.39 & 7.00 \\
        Venus &  0.72 & 3.39 \\
        Earth & 1.00 & 0.00\\
        NEOs & < 1.30 & Variable \\
        Mars & 1.52 & 1.85 \\
        Main belt & 2.20-3.20 & Variable \\
        Ceres & 2.77 & 10.60 \\
        Jupiter & 5.20 & 1.30 \\
        Saturn & 9.50 & 2.49 \\
        Uranus & 19.20 & 0.77 \\
        Neptune & 30.10 & 1.77\\
        \bottomrule \bottomrule 
	\end{tabular}
  \label{tab:CBdata}
\end{table*}

\begin{figure*}[h]
	\centering
	\includegraphics[width=\textwidth]{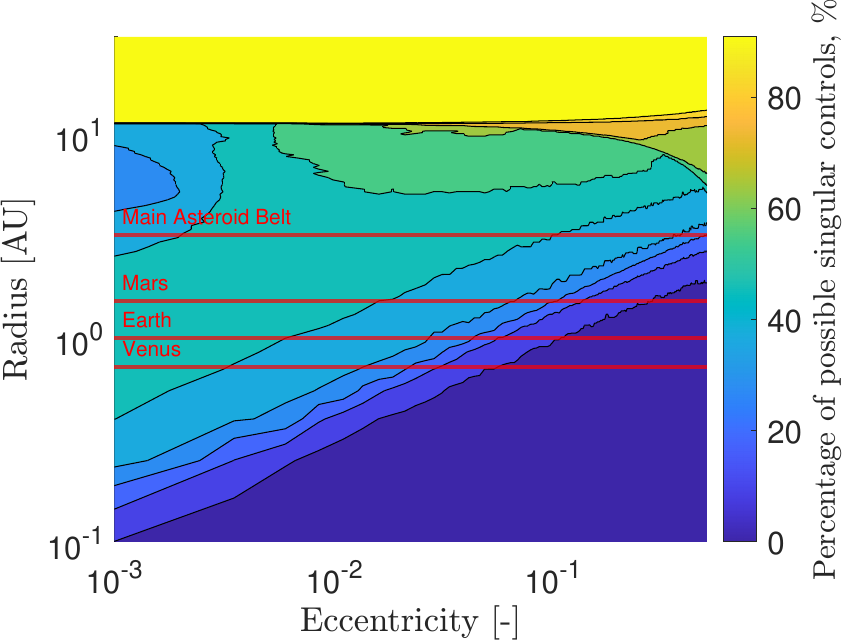}
	\caption{Percentage of possible singular controls over total as a function of the orbit radius and the eccentricity, provided that the algebraic necessary condition is satisfied.}
	\label{fig:percentage}
\end{figure*}

\section{Conclusions}
\label{sec:conclusion}
In this work, algebraic necessary conditions to have singular arcs for the planar two-body low-thrust trajectory optimization problem were presented. The approach, which exploits Gauss variational equations, has allowed to express the necessary conditions as a function of three physical variables only. An analytical expression on the singular control has also been found, which only depends on a limited set of physical variables too. We have shown that the necessary condition is only satisfied if the angle between the thrust direction and the spacecraft radius assumes discrete values, which are in number between six and ten. This suggests the rarity of singular arcs. Moreover, through numerical simulations, we have shown that singular arcs can indeed happen but the cases when the associated throttle factor is singular are relatively rare when trajectories in the inner Solar System need to be designed. Although our approach is specific for planar cases, it is likely that the results can also be extended to three-dimensional cases in practical applications due to the small inclination of most Solar System's celestial bodies. Further work will consist of theoretical investigation of such cases.

\section*{Acknowledgment}

A.\ C.\ M., C.\ G., and F.\ T.\ acknowledge EXTREMA, a project that has received funding from the European Research Council (ERC) under the European Union’s Horizon 2020 research and innovation programme (Grant Agreement No.\ 864697). R.B. acknowledges the French National Research Agency who provided support with funding ANR-22-CE46-0006.

\bibliography{references}

\end{document}